\numberwithin{equation}{section}
\newtheorem{theorem}{Theorem}[section]
\newtheorem{lemma}[theorem]{Lemma}
\newtheorem{corollary}[theorem]{Corollary}
\newtheorem{proposition}[theorem]{Proposition}
\theoremstyle{definition}
\newtheorem{definition}[theorem]{Definition}
\theoremstyle{remark}
\newtheorem{remark}[theorem]{Remark}
\newcommand{\rot}{\text{\rm rot }}
\newcommand{\rote}{\text{\rm rot}}
\newcommand{\p}{{\partial}}
\newcommand{\nab}{\nabla}
\newcommand{\mct}{\mathcal{T}_h}
\newcommand{\mctps}{\mathcal{T}_h^{\text{ps}}}
\newcommand{\dive}{{\ensuremath\mathop{\mathrm{div}\,}}}
\newcommand{\Div}{{\rm div}\,}
\newcommand{\jmp}[1]{ [\![ {#1}  ]\!] }
\newcommand{\pol}{\EuScript{P}}
\newcommand{\bbR}{\mathbb{R}}
\newcommand{\VV}{\mathcal{V}}
\newcommand{\bbN}{\mathbb{N}}
\newcommand{\lrarrow}{\hspace{5pt}\longrightarrow\hspace{5pt}}
\newcommand{\range}{{\rm range}\,}
\newcommand{\bbV}{\mathbb{V}}
\newcommand{\bbT}{\mathbb{T}}
\newcommand{\bbE}{\mathbb{E}}
\newcommand{\PS}{{\rm PS}}
\newcommand{\PST}{T^{{\rm ps}}}
\title[Powell-Sabin]{Exact sequences on Powell-Sabin splits}
\author{J.~Guzm\'an,  A. Lischke, M.~Neilan}
\begin{document}

\maketitle

\begin{abstract}
We construct {smooth} finite elements spaces on Powell-Sabin triangulations that form 
an exact sequence. 
{The first space of the sequence coincides
with the classical $C^1$ Powell--Sabin space,
while the others form stable and divergence--free yielding
pairs for the Stokes problem.}
We develop degrees of freedom for these spaces
that induce  projections that commute with {the differential operators}.
\end{abstract}

\thispagestyle{empty}
\section{Introduction}
In the finite element exterior calculus \cite{AFW06, AFW10},
sequences of discrete spaces that conform to the continuous de Rham complex
are used to approximate solutions of the Hodge--Laplacian.
While this framework has been successfully applied to the de Rham complex with minimal $L^2$ smoothness, recent progress has extended this methodology to higher order Sobolev spaces, i.e., spaces with greater smoothness. 
Such constructions naturally lead to structure--preserving discretizations for 
the Stokes/Navier--Stokes problem as well as problems in linear elasticity.
For example, in recent work \cite{FuGuzmanNeilan19, ChristiansenHu} 
 specific mesh refinements were used
to build spaces of continuous piecewise polynomial $k$-forms
with continuous exterior derivative.
In particular,  it is shown in \cite{FuGuzmanNeilan19} that {\em locally}, smooth finite element spaces form an exact sequence on so--called 
Alfeld splits in any spatial dimension and for any polynomial degree.  Global spaces in three dimensions
are also constructed in \cite{FuGuzmanNeilan19}, leading to stable finite element pairs
for the Stokes problem (also see \cite{Zhang04}). On the other hand, Christiansen and Hu \cite{ChristiansenHu} considered low-order approximations in any dimension. However they use different splits as they move along the de Rham sequence. For zero forms they use the finest split (e.g. in two dimensions it is the Powell-Sabin split). For $n-1$ forms, where $n$ is the dimension, they use the Alfeld split.

In this paper we construct smooth finite element spaces 
 on Powell--Sabin splits that form an exact sequence.  
In the lowest order case, the first space in the sequences coincides with the piecewise quadratic $C^1$ Powell--Sabin space \cite{PowellSabin77,SplineBook}. However, we construct these spaces for any polynomial degree which appears to be new (cf.~\cite{Gros16A,Gros16B}). We also define smooth spaces on Powell-Sabin splits for vector-valued polynomial spaces,
define commuting projections onto the finite element spaces,
and characterize the range and kernel of differential operators acting on the finite element spaces.
The last two spaces in the sequence form stable finite element pairs
for the Stokes problem that enforce the incompressibility constraint exactly; see \cite{JohnEtal}.

 A potential advantage of the use of Powell-Sabin splits is that
 the minimal polynomial degree of the global spaces
 is not expected to increase with respect to the spacial dimension.
For example, the lowest polynomial degree of $C^1$ spaces on Powell--Sabin splits
is two in both two and three dimensions.  In contrast, the polynomial degree of smooth
piecewise polynomials must necessarily increase with dimension on Alfeld splits.
In two dimensions, $C^1$ piecewise polynomials have degree of at least three,
 whereas in three dimensions the minimal polynomial degree is five \cite{Alfeld84,SplineBook}.
 These degree restrictions for $C^1$ conforming spaces also dictate 
the polynomial degrees of other finite element spaces on Alfeld splits.  For example, 
finite element spaces that approximate the velocity
in the Stokes problem must have degree of at least the spatial dimension \cite{ArnoldQin92,Zhang04,GuzmanNeilan18}.

Let us describe the Powell-Sabin split here. Let $\Omega \subset \mathbb{R}^2$ be a polyhedral domain, and let $\mct$ be a simplicial,
shape--regular triangulation of $\Omega$. Then the Powell-Sabin  triangulation $\mctps$ is obtained as follows. We select an interior point of each triangle $T \in \mct$ and adjoin this point with each vertex of $T$. Next, the interior points of each adjacent pair of triangles are connected with an edge. For any $T$ that shares an edge with the boundary of $\Omega$, an arbitrary point on the boundary edge is selected to connect with the interior point of $T$, so that each $T \in \mct$ is split into six triangles. See Figure \ref{fig:GlobalMesh}. In order for the resulting refinement $\mctps$ to be well-defined, the interior points must be selected such that their adjoining edge intersects the edge shared by their respective triangles in $\mct$, in which case $\mctps$ is the Powell-Sabin refinement of $\mct$. One common choice of interior points that produces a well-defined triangulation is the incenter of each $T \in \mct$, i.e., the center point of the largest circle that fits within $T$ \cite{SplineBook}. We define the set $\mathcal{M}(\mctps)$ to be the points of intersection of the edges of $\mct$ with the edges that adjoin interior points.  An interesting fact about the meshes constructed is that the points in $\mathcal{M}(\mctps)$ are singular vertices of the mesh $\mctps$; see  \cite{ScottVogelius85}. Hence, the last space in our sequence has to be modified accordingly; see the global space $\VV_{r}^2(\mctps)$ below. 

Related to the current work
is \cite{Zhang08,Zhang11},
where conforming finite element pairs
are proposed and studied for the Stokes problem
on Powell--Sabin meshes.
There it is shown that if the discrete velocity
space is the linear Lagrange finite element space, and if the pressure
space is the image of the divergence operator
acting on the discrete velocity space,
then the resulting pair is inf--sup stable.
Note
that, by design, the discrete pressure spaces in \cite{Zhang08,Zhang11}, and correspondingly
the range of the divergence operator, is not explicitly given.
Practically, this issue is bypassed by using the iterative penalty method
to solve the finite element method without explicitly constructing a basis
of the discrete pressure space.  In this paper we explicitly construct the discrete pressure space and characterize
the space of divergence--free functions for any polynomial degree.


The rest of the paper is organized as follows.  In the next section we state some preliminary 
definitions and results on a single macro-triangle.  In Section \ref{sec:SequencesMacro}
we show that the smooth finite element spaces form an exact sequence
on macro-triangles, and in Section \ref{sec:commuting_projections} we develop degrees of freedom and projections for these spaces,
and prove commutative properties of these projections.
We extend these results to the global setting in Section \ref{sec:Global}
and derive similar results.  We end the paper in Section \ref{sec:Conclusions}
with some concluding remarks.

\begin{center}
\begin{figure}[ht!]
\includegraphics[width=.49\textwidth]{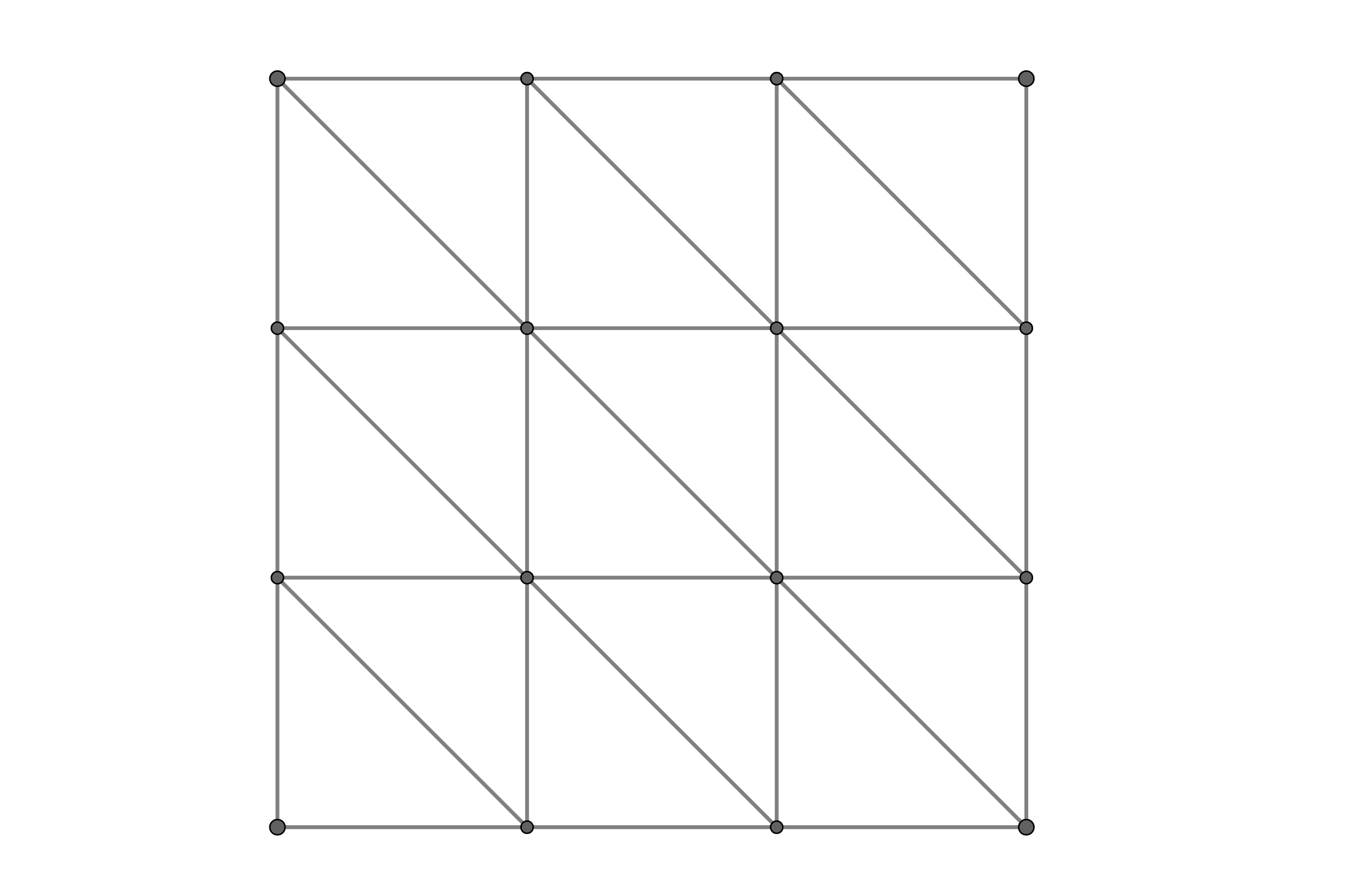}
\includegraphics[width=.49\textwidth]{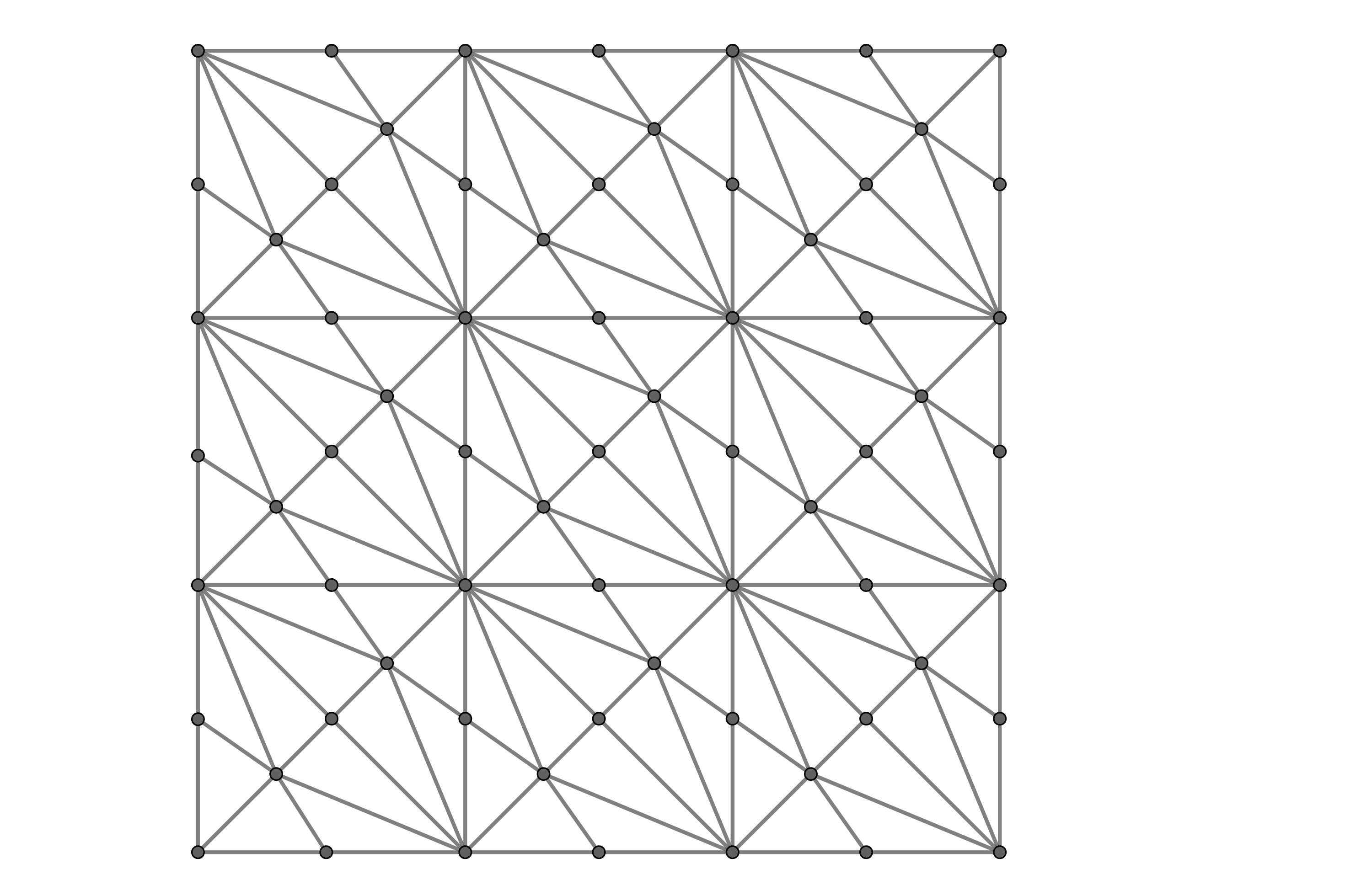}
\caption{(\emph{left}) A triangulation of the unit square, and (\emph{right}) its Powell-Sabin refinement.}
\label{fig:GlobalMesh}
\end{figure}
\end{center}

\section{Spaces on one macro-triangle}\label{sec:SpacesMacro}
Let $T$ be a triangle with vertices $z_1, z_2,$ and $z_3$, labelled counter--clockwise, and let $z_0$ be an interior point of $T$.
Denote the edges of $T$  by $\{e_i\}_{i=1}^3$, labelled such that $z_i$ is not a vertex of $e_i$, i.e., $e_i=[z_{i+1},z_{i+2}]$.
We denote
the outward unit normal of $\p T$ restricted to $e_i$
as $n_i$ and the tangent vector by $t_i$. Let $z_{3+i}$ be an interior point of edge $e_i$. We then construct the triangulation $\PST = \{T_1, \ldots, T_6 \}$ by connecting each $z_i$ to $z_0$ for $1 \le i \le 6$; see Figure \ref{fig:Notation}. We let $\mathcal{E}^b(\PST)$ be the set containing the six boundary edges of $\PST$. We also let $\mathcal{M}(\PST)=\{z_4, z_5, z_6\}$ and use the notation for $z \in \mathcal{M}(\PST)$, $\mathcal{T}(z)= \{K_1,  K_2\}$, where $K_i \in \PST$  have $z$ as a vertex. We also set $T(z)=K_1\cup K_2$.
Let $z \in \mathcal{M}(\PST)$ and suppose that $\mathcal{T}(z)=\{K_1, K_2\}$ with common edge $e$ then we define the jump as follows
\begin{equation*}
\jmp{p}(z)=  p_1(z) m_1+ p_2(z) m_2,
\end{equation*}
where $p_i=p|_{K_i}$ and $m_i$ is the outward pointing normal to $K_i$ perpendicular to $e$. We see then that $\jmp{p}(z)=  (p_1(z)-p_2(z)) m_1= -(p_1(z)-p_2(z)) m_2$.

\begin{center}
\begin{figure}
\includegraphics[width=.6\textwidth]{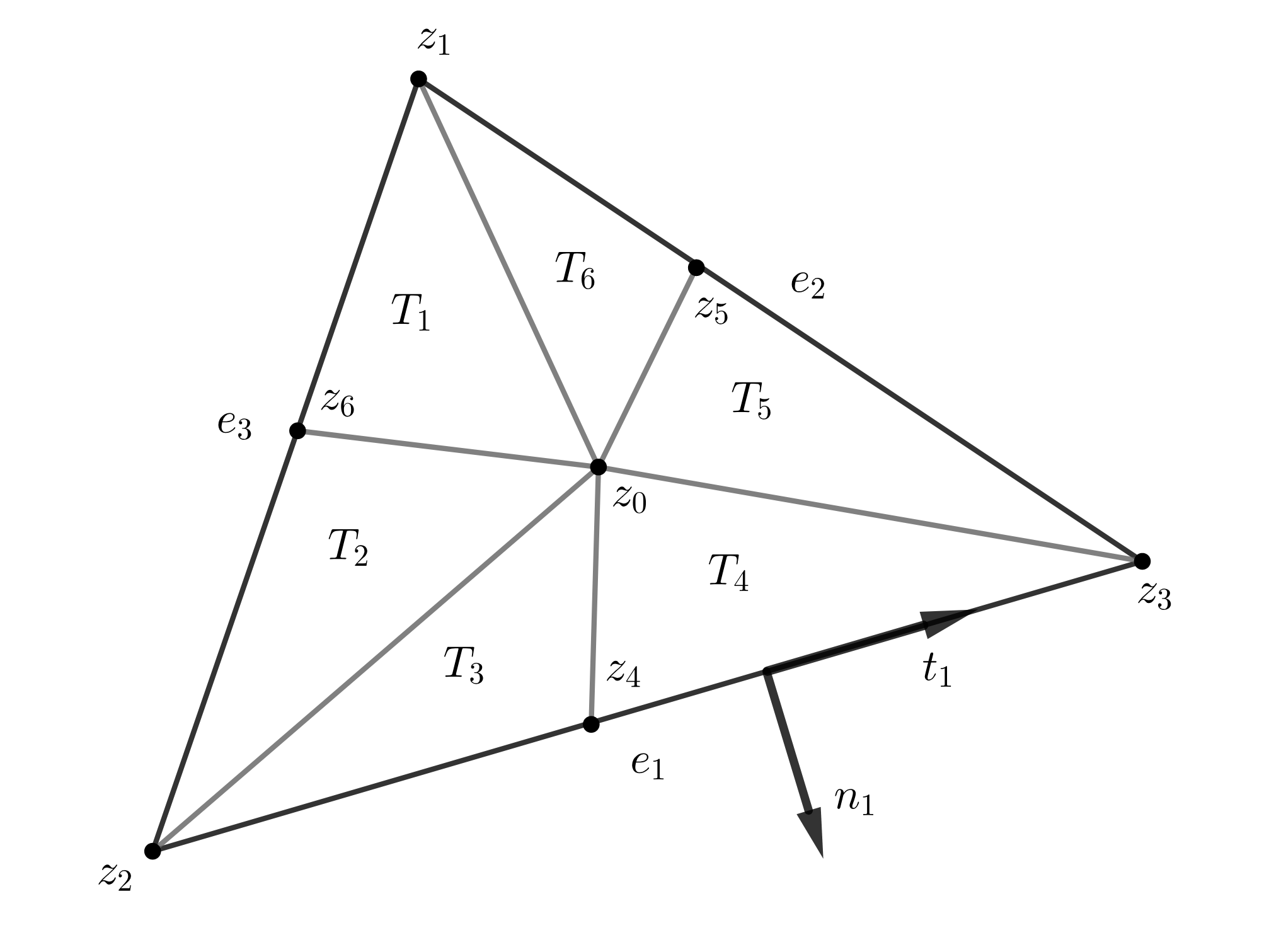}
\caption{A pictorial description of a Powell-Sabin split of a triangle.}
\label{fig:Notation}
\end{figure}
\end{center}

Let $\mu$ be the unique piecewise linear function on the mesh $\PST$ such that $\mu(z_0)=1$ and $\mu =0 $ on $\partial T$. 
We use the notation $\nab \mu_i:=\nab \mu |_{e_i} = \nab \mu|_{T(z_{3+i})}$ and note that
\begin{equation}\label{mun}
\frac{1}{| \nab \mu_i|} \nab \mu_i=-n_i\quad (i=1,2,3),
\end{equation}
and hence
\begin{equation}\label{mut}
\nab \mu_i \cdot t_i=0\quad (i=1,2,3).
\end{equation}

\subsection{Local finite element spaces}
In this section we consider three classes of finite element spaces each with varying smoothness on $\PST$. 
First we define the differential operators
\begin{align*}
\rot q = \big(\frac{\p q}{\p x_2},-\frac{\p q}{\p x_1}\big)^\intercal,\qquad \Div v = \frac{\p v_1}{\p x_1}+ \frac{\p v_2}{\p x_2},
\end{align*}
and corresponding spaces, for an open bounded domain $S\subset \bbR^2$,
\begin{alignat*}{2}
&H({\rm rot};S) = \{q\in L^2(S):\ \rot q\in L^2(S)\},\quad &&H({\rm div};S) = \{v\in [L^2({S})]^2:\ \Div v\in L^2({S})\},\\
&\mathring{H}({\rm rot};S) = \{q\in H({\rm rot};S):\ q|_{\p S} =0\},\quad &&\mathring{H}({\rm div};S) = \{v\in H({\rm div};S):\ v\cdot n_S|_{\p S}=0\},
\end{alignat*}
where $n_S$ denotes the outward unit normal of $S$.  We also denote by $\mathring{L}^2(S)$ the space
of square integrable functions on $S$ with vanishing mean.

For $r\in \bbN$, let $\pol_r(S)$ denote the space of polynomials 
of degree $\le r$ with domain $S$, and we use the convention $\pol_r(S) = \{0\}$ for $r < 0$. Define the piecewise polynomial space
on the Powell--Sabin split as
\[
\pol_r(\PST) = \{q\in L^2(T):\ q|_S \in \pol_r(S), \,\forall S\in \PST\}.
\]
\begin{remark}\label{rem:factoring}
For any $q\in \pol_r(\PST)$ satisfying $q|_{\p T}=0$, there exists $p\in \pol_{r-1}(\PST)$ such that
$q = \mu p$.
\end{remark}
\begin{definition}
Let $r\in\bbN$.  The 
{\em N\'ed\'elec spaces} (of the second-kind) with and without boundary conditions are given by \cite{Nedelec86}
\begin{alignat*}{2}
&V_r^0(\PST) = \pol_r(\PST)\cap H(\rote;T),\qquad &&\mathring{V}_r^0(\PST) = \pol_r(\PST)\cap \mathring{H}(\rote;T),\\
&V_r^1(\PST) = \pol_r(\PST)\cap H({\rm div};T),\qquad &&\mathring{V}_r^1(\PST) = \pol_r(\PST)\cap \mathring{H}({\rm div};T),\\
&V_r^2(\PST) = \pol_r(\PST),\qquad && \mathring{V}_r^2(\PST) = \pol_r(\PST)\cap \mathring{L}^2(T).
\end{alignat*}
\end{definition}

\begin{definition}
The {\em Lagrange space} $L_r^k(\PST)$ (resp., $\mathring{L}_r^k(\PST)$) is the subspace
of $V_r^k(\PST)$ (resp., $\mathring{V}_r^k(\PST)$) consisting of continuous piecewise polynomials, i.e., 
\begin{alignat*}{2}
&L_r^0(\PST)= \pol_r(\PST)\cap C(T),\qquad &&\mathring{L}_r^0(\PST)= L_r^0(\PST)\cap \mathring{H}(\rote;T),\\
&L_r^1(\PST)= [L_r^0(\PST)]^2, \qquad &&\mathring{L}_r^1(\PST)=  [\mathring{L}_r^0(\PST)]^2,\\
&L_r^2(\PST)=L_r^0(\PST),\qquad &&\mathring{L}_r^2(\PST)={\mathring{L}_r^0(\PST)\cap \mathring{V}_r^2(\PST)}.
\end{alignat*}
\end{definition}

\begin{remark}
Note the redundancies in notation, $L_r^0(\PST)=V_r^0(\PST)$ and  $\mathring{L}_r^0(\PST)=\mathring{V}_r^0(\PST)$.
\end{remark}

\begin{definition}
We define the {\em smooth spaces}
with and without boundary conditions as
\begin{alignat*}{2}
&S_r^0(\PST)= \{ v \in L_r^0(\PST): \rot v \in [C(T)]^2\}, \quad &&\mathring{S}_r^0(\PST)= \{ v \in S_r^0(\PST): v=0 \text{ and } \rot v=0 \text{ on } \partial T   \},\\
&S_r^1(\PST)= \{ v \in L_r^1(\PST): \dive v \in C(T),\}, \quad &&\mathring{S}_r^1(\PST)= \{ v \in S_r^1(\PST):   v=0 \text{ and } \dive v=0 \text{ on } \partial T   \}, \\
&S_r^2(\PST)=L_r^2(\PST),\quad &&\mathring{S}_r^2(\PST)=\mathring{L}_r^2(\PST).
\end{alignat*}
\end{definition}

\section{Exact sequences on a macro triangle}\label{sec:SequencesMacro}
The goal of this section is to derive
exact sequences consisting of the piecewise polynomial
spaces defined in the previous section.
As a first step, we state a well-known result,
that the N\'ed\'elec spaces form exact sequences \cite{AFW06,AFW10}.
\begin{proposition}\label{prop:NedelecExact}
The following sequences are exact, i.e., 
the range of each map is the kernel of the succeeding map
\begin{alignat*}{5}
&\bbR\
{\xrightarrow{\hspace*{0.5cm}}}\
{V}_{r}^0(\PST)\
&&\stackrel{\rote}{\xrightarrow{\hspace*{0.5cm}}}\
{V}_{r-1}^1(\PST)\
&&\stackrel{\dive}{\xrightarrow{\hspace*{0.5cm}}}\
{V}_{r-2}^2(\PST)\
&&\xrightarrow{\hspace*{0.5cm}}\
 0,\\
 &0\
{\xrightarrow{\hspace*{0.5cm}}}\
\mathring{V}_{r}^0(\PST)\
&&\stackrel{\rote}{\xrightarrow{\hspace*{0.5cm}}}\
\mathring{V}_{r-1}^1(\PST)\
&&\stackrel{\dive}{\xrightarrow{\hspace*{0.5cm}}}\
\mathring{V}_{r-2}^2(\PST)\
&&\xrightarrow{\hspace*{0.5cm}}\
 0.
 \end{alignat*}
\end{proposition}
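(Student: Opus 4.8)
The plan is to treat this as the standard finite element de Rham complex built from the full-polynomial (second-kind) spaces on the contractible domain $T$; the Powell--Sabin structure of $\PST$ plays no special role here, so the argument is the generic one valid on an arbitrary triangulation. I would handle both sequences in parallel, first checking that each row is a complex, then verifying exactness node by node, and reserving the surjectivity of $\dive$ at the final node for a dimension count.

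First I would dispatch the bookkeeping. For $q\in V_r^0(\PST)$ (a continuous piecewise $\pol_r$ function), the normal component of $\rot q$ across an interior edge equals the tangential derivative of $q$ along that edge, which is single-valued by continuity of $q$; hence $\rot q\in \pol_{r-1}(\PST)\cap H(\dive;T)=V_{r-1}^1(\PST)$, while $\dive v\in\pol_{r-2}(\PST)=V_{r-2}^2(\PST)$ for any $v\in V_{r-1}^1(\PST)$. The identity $\dive\circ\rote=0$ is immediate, so each row is a complex. Exactness at the first node is elementary: $\rot q=0$ forces $\nabla q=0$, so $q$ is constant on the connected set $T$, matching the range of $\bbR\to V_r^0(\PST)$; in the boundary-conditioned row the same reasoning together with $q|_{\p T}=0$ gives $q\equiv 0$. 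For the bottom row I would also record that the divergence theorem forces $\int_T \dive v=\int_{\p T} v\cdot n=0$ when $v\cdot n|_{\p T}=0$, which is exactly the mean-value constraint defining $\mathring{V}_{r-2}^2(\PST)$.

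The key exactness statement is at the middle node, and here I would use a polynomial potential argument. Given $v\in V_{r-1}^1(\PST)$ with $\dive v=0$, the Poincar\'e lemma on the simply connected domain $T$ produces a scalar potential $q\in H^1(T)$ with $v=\rot q$. Since $\nabla q$ then agrees with a rotation of $v$ and is therefore piecewise $\pol_{r-1}$, each restriction $q|_S$ lies in $\pol_r(S)$, so $q\in\pol_r(\PST)$; being simultaneously in $H^1(T)$, the piecewise polynomial $q$ has no edge jumps and is thus continuous, i.e.\ $q\in V_r^0(\PST)$. In the boundary-conditioned row, $v\cdot n|_{\p T}=0$ says the tangential derivative of $q$ vanishes on $\p T$, so $q$ is constant there and may be normalized to vanish on $\p T$, giving $q\in\mathring{V}_r^0(\PST)$. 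This establishes $\ker(\dive)=\range(\rote)$ in both rows.

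The main obstacle is surjectivity of $\dive$ at the final node, and I would obtain it by rank--nullity rather than by an explicit gluing of local right inverses (the latter is delicate, since element-wise divergence right inverses need not match normal traces). Having proved exactness at the first two nodes, $\dim\ker(\dive|_{V_{r-1}^1})=\dim V_r^0(\PST)-1$, so surjectivity onto $V_{r-2}^2(\PST)$ is equivalent to the Euler characteristic identity
\[
\dim V_r^0(\PST)-\dim V_{r-1}^1(\PST)+\dim V_{r-2}^2(\PST)=1,
\]
with the right-hand side replaced by $0$ in the boundary-conditioned row. I would verify these identities by counting degrees of freedom of the continuous Lagrange space $V_r^0(\PST)$, the normal-continuous space $V_{r-1}^1(\PST)$, and the fully discontinuous space $V_{r-2}^2(\PST)=\pol_{r-2}(\PST)$ on the six-triangle mesh; this is the routine but slightly tedious part. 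Alternatively, since $T$ is contractible, one may simply invoke the general finite element exterior calculus result of \cite{AFW06,AFW10}.
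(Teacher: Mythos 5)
Your proposal is correct, but it is genuinely different from what the paper does: the paper offers no proof of this proposition at all, stating it as a well-known fact and citing the finite element exterior calculus literature \cite{AFW06,AFW10} (your closing sentence, ``alternatively\dots invoke the general FEEC result,'' is in fact the paper's entire argument). Your self-contained route is sound at every node: the complex property via $\rot q\cdot n=\partial_t q$, exactness at the first node by connectedness, exactness at the middle node by the stream-function form of the Poincar\'e lemma for divergence-free fields in $H({\rm div};T)$ on a simply connected domain (cf.~\cite{GiraultRaviartBook,Costabel}) together with the observation that a piecewise polynomial in $H^1(T)$ is continuous, and surjectivity of $\dive$ by rank--nullity. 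The Euler-characteristic identities you defer to a count do check out against the dimension formulas the paper itself records later: with $\dim V_r^0(\PST)=3r^2+3r+1$, $\dim V_r^1(\PST)=6r^2+12r+6$, $\dim V_r^2(\PST)=3r^2+9r+6$, one gets
\[
\dim V_r^0(\PST)-\dim V_{r-1}^1(\PST)+\dim V_{r-2}^2(\PST)=(3r^2+3r+1)-6r^2+(3r^2-3r)=1,
\]
and similarly the boundary-condition count gives $0$, exactly as your argument requires. What the two approaches buy is the usual trade-off: the paper's citation is economical and leans on a general theory valid in any dimension and for any contractible macro-element, while your proof is elementary, two-dimensional, and makes explicit where simple connectivity of $T$ and the Euler relation enter---which is pedagogically valuable and also mirrors the strategy the paper later uses for the \emph{global} exactness theorem (where it, too, proves surjectivity of $\dive$ by a rank--nullity plus Euler-relation count). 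One small point worth making explicit in your write-up: in the boundary-conditioned row, rank--nullity gives surjectivity of $\dive$ onto a space of dimension $\dim\mathring{V}_{r-1}^1(\PST)-\dim\mathring{V}_r^0(\PST)$, and you need the divergence-theorem observation you recorded (range $\subset\mathring{V}_{r-2}^2(\PST)$) \emph{before} comparing dimensions, since otherwise equality of dimensions alone would not identify the range with the mean-zero space; you do have this ingredient, so the logic closes.
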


The goal now is to extend Proposition \ref{prop:NedelecExact}
to incorporate smooth spaces.  An integral component
of this extension is a characterization of the range of the divergence operator
acting on the (vector-valued) Lagrange space.
For example, it is known \cite[Proposition 2.1]{ScottVogelius85}
that if $v \in \mathring{L}_r^1(\PST)$ then $\dive v$ is continuous at 
the vertices $z_4, z_5, z_6$.  In particular, this is because each of these vertices is a {\em singular vertex},
i.e., the edges meeting at the vertex fall on exactly two straight lines.
Hence, in order to extend Proposition \ref{prop:NedelecExact} and
to characterize the range of $\dive \mathring{L}_r^1(\PST)$,
we will consider the spaces
\begin{align*}
\VV_r^2(\PST)&= \{ q\in V_r^2(\PST):  q \text{ is continuous at } z_4, z_5, z_6\},\\
\mathring{\VV}_r^2(\PST)&=  \VV_r^2(\PST)\cap \mathring{L}^2(T).
\end{align*}
We then have that $\dive \mathring{L}_r^1(\PST) \subset \mathring{\VV}_{r-1}^2(\PST)$. 
In this  section we show that 
$\dive:\mathring{L}_r^1(\PST)\to \mathring{\VV}_{r-1}^2(\PST)$ is surjective, i.e.,
$\dive \mathring{L}_r^1(\PST) = \mathring{\VV}_{r-1}^2(\PST)$.

The proof of this result 
is based on several preliminary lemmas.  As a first step, we state
the canonical degrees of freedom for the lowest order 
N\'ed\'elec $H({\rm div})$--conforming finite element space on the unrefined triangulation
\cite{Nedelec86}.
\begin{lemma}\label{lem:NedelecHdiv}
Any $w\in [\pol_1(T)]^2$ is uniquely determined by 
the values
\[
\int_{e_i} (w\cdot n_i)\kappa \quad \forall \kappa\in \pol_1(e_i).
\]
\end{lemma}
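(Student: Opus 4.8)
The plan is to prove unisolvence by a dimension count together with injectivity of the evaluation map. First I would note that $\dim [\pol_1(T)]^2 = 6$, since $\pol_1(T)$ is three--dimensional. The number of prescribed functionals is also $6$: there are three edges, and on each $e_i$ the space $\pol_1(e_i)$ of affine functions along the edge is two--dimensional. The functionals define a linear map from $[\pol_1(T)]^2$ into $\bbR^6$ between spaces of equal dimension, so it is bijective precisely when it is injective. Hence it suffices to show that the only $w\in[\pol_1(T)]^2$ annihilated by every functional is $w=0$.

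Next I would extract the vanishing of the normal traces. Suppose $\int_{e_i}(w\cdot n_i)\kappa = 0$ for all $\kappa\in\pol_1(e_i)$ and all $i$. Since $n_i$ is a constant vector and $w$ is affine, the restriction $w\cdot n_i|_{e_i}$ itself lies in $\pol_1(e_i)$. Taking $\kappa = w\cdot n_i|_{e_i}$ in the vanishing condition gives $\int_{e_i}|w\cdot n_i|^2 = 0$, so that $w\cdot n_i \equiv 0$ on $e_i$ for each $i=1,2,3$. Thus the normal component of $w$ vanishes on every edge of $T$.

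Finally I would exploit the vertex incidences. Recalling $e_i = [z_{i+1},z_{i+2}]$, each vertex $z_j$ is an endpoint of exactly two of the edges, whose outward normals are linearly independent because $T$ is nondegenerate. For instance $z_1$ lies on $e_2$ and $e_3$, so $w(z_1)\cdot n_2 = w(z_1)\cdot n_3 = 0$ forces $w(z_1)=0$; the same reasoning yields $w(z_2)=w(z_3)=0$. Each scalar component of $w$ is then an affine function vanishing at the three non--collinear vertices $z_1,z_2,z_3$, hence vanishes identically, giving $w=0$.

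The argument is essentially elementary, and I do not anticipate a genuine obstacle: this is the standard unisolvence proof for the lowest--order second--kind $H(\mathrm{div})$ element. The one step warranting care is the passage from the vanishing of the normal traces on the edges to the pointwise vanishing of $w$ at the vertices, which relies on the linear independence of the two edge normals meeting at each vertex.
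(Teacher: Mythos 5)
Your proof is correct. Note that the paper itself gives no proof of this lemma: it is stated as the canonical degrees of freedom of the lowest--order second--kind N\'ed\'elec $H({\rm div})$ element (i.e., the $BDM_1$ element) with a citation to N\'ed\'elec's 1986 paper, so there is no argument in the text to compare against. Your three steps --- the dimension count $\dim [\pol_1(T)]^2 = 6 = 3\cdot\dim \pol_1(e_i)$ reducing unisolvence to injectivity, the choice $\kappa = (w\cdot n_i)|_{e_i}$ (legitimate since $n_i$ is constant and $w$ is affine, so the normal trace lies in $\pol_1(e_i)$) to kill the normal components on all edges, and the observation that the two linearly independent edge normals meeting at each vertex force $w$ to vanish at the three non--collinear vertices --- constitute the standard, complete unisolvence argument, and each step is sound.
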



\begin{lemma}\label{lemma2}
Let  $q \in \VV_{r}^2(\PST)$  and $r \ge 1$, then there exists  $w \in L_r^1(\PST)$  and $g \in V_{r-1}^2(\PST)$ such that $\mu^s q=\dive (\mu^{s+1} w)+ \mu^{s+1} g$ for any $s \ge 0$. 
\end{lemma}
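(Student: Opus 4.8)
The plan is to turn the stated identity into a polynomial division problem and then into a normal–trace interpolation problem for a continuous vector field. Throughout I read the conclusion as: for each fixed $s\ge 0$ there exist $w=w_s\in L_r^1(\PST)$ and $g=g_s\in V_{r-1}^2(\PST)$ with the displayed identity (a single pair cannot work for all $s$ unless $q|_{\p T}=0$, since the coefficient of $s$ would have to vanish).

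First I would expand the divergence. Since $\mu$ is piecewise linear, on each $S\in\PST$ we have $\dive(\mu^{s+1}w)=\mu^{s+1}\dive w+(s+1)\mu^{s}\,\nab\mu\cdot w$. Substituting this and cancelling the common factor $\mu^{s}$ sub-triangle by sub-triangle (legitimate because $\mu\not\equiv 0$ on any $S$), the claim becomes equivalent to finding $w\in L_r^1(\PST)$ and $g\in V_{r-1}^2(\PST)$ with
\[
q=(s+1)\,\nab\mu\cdot w+\mu\,(\dive w+g).
\]
Here $V_{r-1}^2(\PST)=\pol_{r-1}(\PST)$ carries no continuity constraints and $\dive w\in\pol_{r-1}(\PST)$, so $g$ is free to realize any element of $\pol_{r-1}(\PST)$. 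Hence it suffices to choose $w$ so that $p:=q-(s+1)\,\nab\mu\cdot w$ is divisible by $\mu$. Since $p\in\pol_r(\PST)$, Remark~\ref{rem:factoring} reduces this to the single requirement $p|_{\p T}=0$; once it holds, $p=\mu\tilde p$ with $\tilde p\in\pol_{r-1}(\PST)$, and I simply set $g:=\tilde p-\dive w$.

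Next I would translate $p|_{\p T}=0$ into a condition on $w$ alone. On the two sub-triangles meeting $e_i$, \eqref{mun} gives $\nab\mu=\nab\mu_i=-|\nab\mu_i|\,n_i$ with $|\nab\mu_i|$ a positive constant along $e_i$, so there $\nab\mu\cdot w=-|\nab\mu_i|\,(w\cdot n_i)$ and $p|_{e_i}=0$ becomes
\[
w\cdot n_i=\frac{-1}{(s+1)\,|\nab\mu_i|}\,q\qquad\text{on }e_i,\quad i=1,2,3.
\]
This is exactly where the hypothesis $q\in\VV_r^2(\PST)$ is used: the prescribed datum $\phi_i:=-q/\big((s+1)|\nab\mu_i|\big)$ is a piecewise polynomial of degree $\le r$ on $e_i$ that is \emph{continuous at the split point} $z_{3+i}$, precisely because $q$ is continuous at $z_4,z_5,z_6$ and $|\nab\mu_i|$ is constant on $e_i$. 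A jump of $q$ at $z_{3+i}$ would force $w\cdot n_i$ to be discontinuous there and so preclude a continuous $w$; continuity at the singular vertices is thus indispensable.

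Finally I would construct $w$ from this boundary data. At each vertex $z_k$ ($k=1,2,3$) two edges with linearly independent normals meet, so the two prescribed normal values determine $w(z_k)$; along each $e_i$ the normal trace is fixed to $\phi_i$ while the tangential trace and all remaining interior Lagrange degrees of freedom are chosen freely, producing a continuous $w\in L_r^1(\PST)$ with the required normal traces (one may use Lemma~\ref{lem:NedelecHdiv} to pin down the lowest-order normal moments and edge bubbles for the higher-order part; $r\ge 1$ guarantees enough degrees of freedom). The main obstacle is precisely this construction — verifying that the normal-trace data is admissible for a globally continuous field on the split, which hinges on the continuity of $q$ at $z_4,z_5,z_6$. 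The remaining work, namely the divergence expansion, the cancellation of $\mu^{s}$, and the definition of $g$ via Remark~\ref{rem:factoring}, is routine.
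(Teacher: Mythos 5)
Your proposal is correct and follows essentially the same route as the paper's proof: expand the divergence, cancel $\mu^s$, use Remark \ref{rem:factoring} to reduce everything to making $q-(s+1)\nabla\mu\cdot w$ vanish on $\partial T$, i.e.\ to prescribing the normal traces $w\cdot n_i = -q/\bigl((s+1)|\nabla\mu_i|\bigr)$ on each $e_i$, with the continuity of $q$ at $z_4,z_5,z_6$ being exactly what makes this data compatible with a continuous $w$. The only difference is in how $w$ is assembled — the paper does it explicitly, interpolating the linear endpoint parts $b_i$ by a global field $w_1\in[\pol_1(T)]^2$ via Lemma \ref{lem:NedelecHdiv} and the remainders by locally supported corrections $a_i\nabla\mu_i/|\nabla\mu_i|^2$, whereas you invoke a generic vertex-compatibility and trace-extension argument — but both constructions rest on the same point and are equally valid.
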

\begin{proof}
Let  $b_i \in \pol_1(e_i)$ be the linear function such that $q|_{e_i}-b_i$ vanishes at the end points of $e_i$. 
Because $q-b_i$ vanishes at the endpoints and $q$ is continuous at $z_{3+i}$, 
there exists $a_i\in L^0_r(\PST)$ such that $a_i|_{e_i}=(q-b_i)|_{e_i}$
and $\text{ supp } a_i \in T(z_{3+i})$. Note that $a_i|_{e_j} = 0$ for $i\neq j$.

Next, using \eqref{mun} and the N\'ed\'elec degrees of freedom stated
in Lemma \ref{lem:NedelecHdiv},  we construct a unique function $w_1 \in [\pol_1(T)]^2$ such that 
\begin{equation*}
(s+1) w_1 \cdot \nabla \mu_i = b_i \quad \text{on } e_i,\quad i=1,2,3.
\end{equation*}
We set  $\ell_i= \frac{\nab \mu_i}{ |\nab \mu_i|^2}$, 
\begin{equation*}
w_2 = \frac{1}{s+1}(a_1 \ell_1+ a_2 \ell_2+ a_3 \ell_3),\quad \text{and}\quad w=w_1+w_2.
\end{equation*}
We then see that, on $e_i$,
\begin{equation*}
 (s+1)w \cdot \nab \mu_i=   (s+1) w_1\cdot \nab \mu_i + (s+1)  w_2 \cdot \nab \mu_i=  b_i +a_i=  q.
\end{equation*}
Therefore the function $(s+1) w \cdot \nabla \mu -q$ vanishes on $\partial T$, which implies
 that
 $\mu v=(s+1) w \cdot \nabla \mu -q$ for some $v\in V_{r-1}^2(\PST)$; see Remark \ref{rem:factoring}.

Finally we compute
\begin{equation*}
\mu^s q= \mu^s q+\dive (\mu^{s+1} w )- \mu^{s+1} \dive(w) - \mu^{s} (s+1) w \cdot \nabla \mu=  \dive (\mu^{s+1}w)-\mu^{s+1} (\dive(w) + v ).
\end{equation*}
The proof is complete upon setting
$g=-(\dive w+ v)$. 
\end{proof}

\begin{lemma}\label{lemma1}
For any $\theta \in V_{r}^2(\PST)$ with $r \geq 0$, there exists  $\psi \in L_{1}^1(\PST)$ and $\gamma \in \VV_{r}^2(\PST)$
such that
\begin{equation}\label{aux12}
 \mu^s \theta =\dive(\mu^{s} \psi)+ \mu^s \gamma \quad \text{ for any } s \ge 0.
\end{equation}
\end{lemma}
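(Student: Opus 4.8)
The plan is to choose $\psi$ so that the factor $\mu^s$ can be pulled out of the divergence for \emph{all} $s$ simultaneously, and so that $\dive\psi$ absorbs the jumps of $\theta$ at the singular vertices $z_4,z_5,z_6$. Concretely, I would take $\psi\in L_1^1(\PST)$ to be the continuous piecewise linear vector field determined by the nodal values $\psi(z_0)=0$, $\psi(z_j)=0$ for $j=1,2,3$, and $\psi(z_{3+i})=c_i\,t_i$ for $i=1,2,3$, with scalars $c_i$ to be fixed. On each triangle of $T(z_{3+i})$ the gradient $\nab\mu$ equals the constant $\nab\mu_i$, and the affine function $\nab\mu_i\cdot\psi$ vanishes at all three of that triangle's vertices: at $z_0$ and at the vertex of $T$ belonging to the triangle because $\psi=0$ there, and at $z_{3+i}$ because $\nab\mu_i\cdot t_i=0$ by \eqref{mut}. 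Hence $\nab\mu\cdot\psi\equiv 0$ on $\PST$ regardless of the $c_i$, so that
\begin{equation*}
\dive(\mu^s\psi)=\mu^s\dive\psi+s\,\mu^{s-1}\,\nab\mu\cdot\psi=\mu^s\dive\psi\qquad(s\ge 0).
\end{equation*}
Setting $\gamma:=\theta-\dive\psi$ then yields $\mu^s\theta=\dive(\mu^s\psi)+\mu^s\gamma$ for every $s\ge 0$, and since $\dive\psi$ is piecewise constant and $r\ge 0$ we have $\gamma\in\pol_r(\PST)=V_r^2(\PST)$.

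It remains to fix the $c_i$ so that $\gamma$ is continuous at $z_4,z_5,z_6$, i.e.\ so that $\jmp{\dive\psi}(z_{3+i})=\jmp{\theta}(z_{3+i})$ for $i=1,2,3$. The nodal field carrying $c_i$ is supported on $T(z_{3+i})$, and since $T(z_4),T(z_5),T(z_6)$ each contain no point of $\mathcal{M}(\PST)$ other than their own vertex, $\jmp{\dive\psi}(z_{3+i})$ depends only on $c_i$; thus the three conditions decouple into three scalar equations. On the two triangles $K_1,K_2$ of $T(z_{3+i})$ one has $\psi=c_i\,t_i\,\lambda^{K}$, where $\lambda^{K}$ is the barycentric coordinate of $z_{3+i}$, so that $\dive\psi|_{K_1}-\dive\psi|_{K_2}=c_i\,t_i\cdot(\nab\lambda^{K_1}-\nab\lambda^{K_2})$. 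Because $\lambda^{K_1}$ and $\lambda^{K_2}$ agree on the shared edge $[z_0,z_{3+i}]$, their gradient difference is a nonzero multiple of $m_1$; and since $t_i$ is transversal to $[z_0,z_{3+i}]$ (as $z_0$ lies in the interior of $T$), a short computation gives $t_i\cdot(\nab\lambda^{K_1}-\nab\lambda^{K_2})\neq 0$. Hence $c_i\mapsto\jmp{\dive\psi}(z_{3+i})$ is a nonzero scalar multiple of $m_1$, and each $c_i$ is solved for uniquely to match $\jmp{\theta}(z_{3+i})$. With this choice $\jmp{\gamma}(z_{3+i})=0$, so $\gamma\in\VV_r^2(\PST)$, completing the argument.

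The step I expect to be the main obstacle is this last non-degeneracy: showing that a purely tangential nodal value $c_i\,t_i$ at $z_{3+i}$ produces a genuinely nonzero jump of $\dive\psi$. This is exactly where the singular-vertex structure of the Powell--Sabin split is essential — the two sub-edges of $e_i$ meeting at $z_{3+i}$ are collinear, while the interior edge $[z_0,z_{3+i}]$ is transversal to them — and it is what makes the three scalar systems invertible rather than singular. Once $\nab\mu\cdot\psi\equiv 0$ is secured, the remaining points (global continuity of $\psi$, the degree bound on $\gamma$, and validity for all $s$) are routine.
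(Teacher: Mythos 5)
Your proof is correct and takes essentially the same approach as the paper's: your piecewise linear field $\psi$ with nodal values $c_i t_i$ at $z_{3+i}$ and zero at all other vertices is exactly the paper's $\psi = a_1t_1+a_2t_2+a_3t_3$ (with $c_i=a_i(z_{3+i})$ and $\dive(a_it_i)=\nab a_i\cdot t_i$), and both arguments rest on \eqref{mut} to make $\nab\mu\cdot\psi\equiv 0$ and on matching $\jmp{\dive \psi}(z_{3+i})=\jmp{\theta}(z_{3+i})$ so that $\gamma=\theta-\dive\psi\in\VV_r^2(\PST)$. The only difference is presentational: you spell out the non-degeneracy that makes the three scalar jump equations uniquely solvable, which the paper absorbs into the word ``uniquely'' in its definition of the $a_i$.
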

\begin{proof} 
Given $\theta \in V_{r}^2(\PST)$, we define $a_i\in L_1^0(\PST)$ uniquely by 
the conditions
\[
a_i(z_j) = 0,\ \ j=0,1,2,3,\qquad  a_i(z_{3+j})=0,\ \ j\neq i,\qquad \jmp{\nab a_i\cdot t_i}(z_{3+i}) = \jmp{\theta}(z_{3+i}).
\]
We clearly have $\text{supp } a_i \in T(z_{3+i})$. Setting $\psi = a_1t_1+ a_2 t_2+ a_3 t_3$ we have 
\[
\Div \psi|_{e_i} = \nab a_i \cdot t_i,
\]
and therefore, by the construction of $a_i$, $\gamma:=\theta-\Div \psi \in \VV_r^2(\PST)$.
Furthermore, we have $\psi\cdot \nab \mu|_{T(z_{3+i})} = a_i t_i \cdot \nab \mu|_{T(z_{3+i})} = 0$ for $i=1,2,3$ by \eqref{mut}, and so $\psi \cdot \nab \mu =0$ in $T$.  It then follows that
\begin{align*}
\mu^s \theta - \Div (\mu^s \psi) = \mu^s(\theta- \Div \psi) - s \mu^{s-1} \nab \mu \cdot \psi = \mu^s \gamma.
\end{align*}
\end{proof}

We  combine the previous two lemmas to obtain the following. 
\begin{lemma}\label{lemma301}
Let  $q \in \VV_{r}^2(\PST)$  and $r \ge 1$.  Then there exists  $v \in L_r^1(\PST)$  and $Q \in \VV_{r-1}^2(\PST)$ such that $\mu^s q=\dive (\mu^{s+1} v)+ \mu^{s+1} Q$ for any $s \ge 0$. 
\end{lemma}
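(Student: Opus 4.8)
The plan is to combine Lemma \ref{lemma2} and Lemma \ref{lemma1} directly, using the second to repair the lack of continuity left by the first. First I would apply Lemma \ref{lemma2} to $q \in \VV_r^2(\PST)$ (legitimate since $r \ge 1$) to produce $w \in L_r^1(\PST)$ and $g \in V_{r-1}^2(\PST)$ with
\[
\mu^s q = \dive(\mu^{s+1} w) + \mu^{s+1} g \qquad \text{for all } s \ge 0.
\]
The only defect of this decomposition, relative to the claimed statement, is that the remainder $g$ lies merely in $V_{r-1}^2(\PST)$ and need not be continuous at the singular vertices $z_4, z_5, z_6$, whereas the lemma demands the remainder in the smaller space $\VV_{r-1}^2(\PST)$.

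To remedy this, I would feed $g$ into Lemma \ref{lemma1}. Since $g \in V_{r-1}^2(\PST)$ with $r-1 \ge 0$, that lemma furnishes $\psi \in L_1^1(\PST)$ and $\gamma \in \VV_{r-1}^2(\PST)$ satisfying
\[
\mu^{s'} g = \dive(\mu^{s'} \psi) + \mu^{s'} \gamma \qquad \text{for all } s' \ge 0.
\]
Specializing to $s' = s+1$ and inserting this into the expression for $\mu^s q$ gives
\[
\mu^s q = \dive(\mu^{s+1} w) + \dive(\mu^{s+1}\psi) + \mu^{s+1}\gamma = \dive\big(\mu^{s+1}(w+\psi)\big) + \mu^{s+1}\gamma.
\]
Setting $v = w + \psi$ and $Q = \gamma$ then yields the asserted identity.

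It remains to verify the memberships. We have $Q = \gamma \in \VV_{r-1}^2(\PST)$ by construction, which is exactly the point of invoking Lemma \ref{lemma1}: it trades the discontinuous remainder $g$ for a continuous one at the cost of an extra divergence term that gets absorbed into $v$. For $v$, note $w \in L_r^1(\PST)$ and $\psi \in L_1^1(\PST) \subset L_r^1(\PST)$, the inclusion holding precisely because $r \ge 1$; hence $v = w + \psi \in L_r^1(\PST)$.

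The argument is a short composition, so there is no serious obstacle; the single point deserving attention is that both preliminary lemmas deliver their decompositions with one choice of correctors valid simultaneously for every exponent. Thus $w$ and $g$ are independent of $s$, and applying Lemma \ref{lemma1} to $g$ produces $\psi$ and $\gamma$ independent of the exponent as well. Consequently the final $v$ and $Q$ are fixed functions, and $\mu^s q = \dive(\mu^{s+1} v) + \mu^{s+1} Q$ holds for all $s \ge 0$ with these fixed choices, as required.
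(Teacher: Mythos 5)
Your core argument is exactly the paper's: the paper gives no written proof of this lemma beyond the sentence ``We combine the previous two lemmas,'' and the intended combination is precisely yours --- apply Lemma \ref{lemma2} to $q$, repair the discontinuous remainder $g \in V_{r-1}^2(\PST)$ by applying Lemma \ref{lemma1} with exponent $s+1$, absorb the resulting divergence term into $v = w + \psi \in L_r^1(\PST)$ (the inclusion $L_1^1(\PST)\subset L_r^1(\PST)$ using $r\ge 1$, as you note), and set $Q=\gamma \in \VV_{r-1}^2(\PST)$.

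One correction, however, to the point you single out as ``deserving attention'': the claim that the correctors can be chosen independently of $s$ is false, and in fact cannot be true. Expanding $\dive(\mu^{s+1}v) = (s+1)\mu^s\,\nabla\mu\cdot v + \mu^{s+1}\dive v$ and dividing by $\mu^s$ (legitimate in the interior of $T$, where $\mu>0$), a single fixed pair $(v,Q)$ valid for all $s\ge 0$ would give $q = (s+1)\nabla\mu\cdot v + \mu(\dive v + Q)$ for every $s$, forcing $\nabla\mu\cdot v \equiv 0$ and hence $q = \mu(\dive v + Q)$, i.e., $q|_{\partial T}=0$ --- false for, say, $q\equiv 1$. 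Consistently with this, the paper's proof of Lemma \ref{lemma2} constructs $w$ with an explicit $1/(s+1)$ factor, so its $w$ and $g$ genuinely depend on $s$; only Lemma \ref{lemma1} delivers $s$-uniform correctors, thanks to the property $\psi\cdot\nabla\mu=0$. The statements of Lemmas \ref{lemma2} and \ref{lemma301} must therefore be read with the quantifiers in the order ``for each $s\ge 0$ there exist $\ldots$'' (a defect of the paper's wording as much as of your proof). Under that reading, your composition performed for each fixed $s$ is a complete and correct proof, and it is all that is needed: Theorem \ref{mainthm} invokes the lemma one exponent at a time, with $s=\ell$ fixed in each application.
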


Our last lemma handles the lowest order case which follows
from \cite[Lemma 3.11]{FuGuzmanNeilan19}.
\begin{lemma}\label{lemma3}
Let  $q \in \VV_{0}^2(\PST)$ with $\int_T \mu^s q =0$.  Then there exists  $w \in L_0^1(\PST)$   such that $\mu^s q=\dive (\mu^{s+1} w)$ for any $s \ge 0$. 
\end{lemma}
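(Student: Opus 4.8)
The plan is to unwind the two low-order spaces and reduce everything to a $3\times 2$ linear system over $\bbR^2$. First I would observe that $L_0^1(\PST)=[L_0^0(\PST)]^2$ consists only of \emph{constant} vector fields: a continuous piecewise constant on the connected mesh $\PST$ is globally constant. Thus $w$ is a single vector in $\bbR^2$ with $\dive w=0$, and
$\dive(\mu^{s+1}w)=\nab(\mu^{s+1})\cdot w=(s+1)\,\mu^s\,\nab\mu\cdot w$. On the other hand, $q\in\VV_0^2(\PST)$ is piecewise constant and continuous at $z_4,z_5,z_6$, and since $z_{3+i}$ is interior to the macro sub-triangle $T(z_{3+i})$, this forces $q$ to take a single constant value on each $T(z_{3+i})$ — precisely the pieces on which $\nab\mu=\nab\mu_i$ is constant. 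Cancelling the common factor $\mu^s$, the identity $\mu^s q=\dive(\mu^{s+1}w)$ becomes equivalent to the three scalar equations $(s+1)\,\nab\mu_i\cdot w = q|_{T(z_{3+i})}$ for $i=1,2,3$, with $w$ the single unknown.

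Next I would analyze this overdetermined system. By \eqref{mun} the vectors $\nab\mu_1,\nab\mu_2,\nab\mu_3$ point along the inward normals $-n_1,-n_2,-n_3$, so any two of them are linearly independent and the coefficient matrix has rank $2$. Consequently the map $w\mapsto (s+1)\,\nab\mu\cdot w$ is injective from $\bbR^2$ into the three-dimensional space $\VV_0^2(\PST)$, and solvability for a given $q$ is governed by one compatibility condition: orthogonality of the value triple $\bigl(q|_{T(z_4)},q|_{T(z_5)},q|_{T(z_6)}\bigr)$ to the one-dimensional left null space of the matrix.

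To pin down that null space I would use the identity $\int_T\nab\mu=\int_{\p T}\mu\,n=0$, which holds because $\mu$ vanishes on $\p T$. Writing $\nab\mu=\nab\mu_i$ on $T(z_{3+i})$ turns this into $\sum_{i}|T(z_{3+i})|\,\nab\mu_i=0$, so the left null space is spanned by the area vector $\bigl(|T(z_4)|,|T(z_5)|,|T(z_6)|\bigr)$. Hence the system is solvable exactly when $\sum_i |T(z_{3+i})|\,q|_{T(z_{3+i})}=\int_T q=0$. Finally, using $\int_{T(z_{3+i})}\mu^s=\frac{2}{(s+1)(s+2)}|T(z_{3+i})|$ (since $\mu|_{T(z_{3+i})}$ is the barycentric coordinate of $z_0$ on that piece) gives $\int_T\mu^s q=\frac{2}{(s+1)(s+2)}\int_T q$, so the hypothesis $\int_T\mu^s q=0$ is exactly the compatibility condition — and, pleasantly, is independent of $s$. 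This yields the desired $w$ for each $s$.

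The main obstacle, and the step deserving care, is the dimension mismatch: $w$ carries only two degrees of freedom while $\VV_0^2(\PST)$ is three-dimensional, so one cannot reach an arbitrary $q$. The content of the lemma is that the constraint $\int_T\mu^s q=0$ cuts $\VV_0^2(\PST)$ down to exactly the two-dimensional range of the divergence map, and the identity $\sum_i|T(z_{3+i})|\,\nab\mu_i=0$ is precisely what matches the algebraic compatibility condition to this mean-value constraint. Alternatively, since $\VV_0^2(\PST)$ and $L_0^1(\PST)$ are just the piecewise-constant scalar and constant vector fields of the $2$D Alfeld (Clough--Tocher) split, the statement is a direct instance of \cite[Lemma 3.11]{FuGuzmanNeilan19}.
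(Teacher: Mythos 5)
Your proof is correct, and it takes a genuinely different route from the paper: the paper gives no argument at all for this lemma, but simply invokes Lemma 3.11 of \cite{FuGuzmanNeilan19}, implicitly relying on exactly the observation you make in your closing sentence --- that $L_0^1(\PST)$ consists of the constant vector fields and that continuity at $z_4,z_5,z_6$ forces an element of $\VV_0^2(\PST)$ to be a single constant on each $T(z_{3+i})$, so the lowest-order data really live on the Clough--Tocher (Alfeld) split, where the result is already established. Your argument replaces that citation with transparent, self-contained linear algebra: reducing $\mu^s q=\dive(\mu^{s+1}w)$ to the $3\times 2$ system $(s+1)\,\nab\mu_i\cdot w=q|_{T(z_{3+i})}$, identifying the left null space via $\int_T\nab\mu=\int_{\p T}\mu\,n=0$ as the span of the area vector $\bigl(|T(z_4)|,|T(z_5)|,|T(z_6)|\bigr)$, and verifying through $\int_{T(z_{3+i})}\mu^s=\frac{2}{(s+1)(s+2)}|T(z_{3+i})|$ that the hypothesis $\int_T\mu^s q=0$ is precisely the compatibility condition $\int_T q=0$. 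What your route buys is an explanation of \emph{why} the weighted mean-value hypothesis is the correct one (and that it is independent of $s$), which the citation hides; what the paper's route buys is brevity and consistency with the Alfeld-split machinery it reuses elsewhere. Two cosmetic points: $z_{3+i}$ lies on $e_i\subset\p T$, so it is a boundary point of $T(z_{3+i})$, not an interior one --- what you actually use, correctly, is that it is the common vertex of the two subtriangles of $\mathcal{T}(z_{3+i})$, so continuity there equates the two constant values; and it is worth saying explicitly that $w$ depends on $s$ (it must, since the system scales with $s+1$), which is consistent with how the lemma is applied in the proof of Theorem \ref{mainthm}, namely for the single fixed value $s=r$.
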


We can now state and prove the main result. 
\begin{theorem}\label{mainthm}
For each $p \in \mathring{\VV}_r^2(\PST)$, with $r \geq 0$, there exists a $v \in \mathring{L}_{r+1}^{1}(\PST)$ such that $\dive v=p$.
\end{theorem}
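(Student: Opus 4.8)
The plan is to prove, by induction on $r$, the following slightly stronger statement: for every $q\in\VV_r^2(\PST)$ and every $s\ge 0$ with $\int_T \mu^s q =0$, there exists $w\in L_r^1(\PST)$ such that $\mu^s q=\dive(\mu^{s+1}w)$. Carrying the free parameter $s$ through the induction is what makes the argument close, since the remainder produced by Lemma \ref{lemma301} naturally appears multiplied by one higher power of $\mu$; I would therefore set up the induction on the full class $\VV_r^2(\PST)$ with the weighted mean--zero condition as a separate hypothesis, rather than on $\mathring{\VV}_r^2(\PST)$.

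The base case $r=0$ is precisely Lemma \ref{lemma3}, which supplies $w\in L_0^1(\PST)$ under exactly the mean--zero hypothesis assumed. For the inductive step, given $q\in\VV_r^2(\PST)$ with $r\ge 1$ and $\int_T\mu^s q=0$, I would first invoke Lemma \ref{lemma301} to write $\mu^s q=\dive(\mu^{s+1}v_1)+\mu^{s+1}Q$ with $v_1\in L_r^1(\PST)$ and $Q\in\VV_{r-1}^2(\PST)$. The crucial observation is that $\mu^{s+1}Q$ inherits the mean--zero property: since $\mu$ vanishes on $\partial T$ and $s+1\ge 1$, the term $\dive(\mu^{s+1}v_1)$ integrates to zero over $T$ by the divergence theorem, so $\int_T\mu^{s+1}Q=\int_T\mu^s q=0$. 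This lets me apply the inductive hypothesis to $Q\in\VV_{r-1}^2(\PST)$ with the shifted exponent $s+1$, yielding $w_2\in L_{r-1}^1(\PST)$ with $\mu^{s+1}Q=\dive(\mu^{s+2}w_2)$. Combining the two identities gives $\mu^s q=\dive\big(\mu^{s+1}(v_1+\mu w_2)\big)$, and since $v_1\in L_r^1(\PST)$ and $\mu w_2\in L_r^1(\PST)$, the field $w:=v_1+\mu w_2$ lies in $L_r^1(\PST)$, completing the induction.

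With this statement in hand, the theorem follows by specializing to $s=0$. Indeed, any $p\in\mathring{\VV}_r^2(\PST)$ has vanishing mean, so the hypothesis $\int_T\mu^0 p=0$ holds, and the statement produces $w\in L_r^1(\PST)$ with $p=\dive(\mu w)$. Setting $v=\mu w$, I would then verify that $v$ has the required properties: it is a continuous piecewise polynomial of degree at most $r+1$, and it vanishes on $\partial T$ because $\mu$ does, whence $v\in\mathring{L}_{r+1}^1(\PST)$ with $\dive v=p$.

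I expect the main obstacle to be organizational rather than analytical: keeping the polynomial degrees and the powers of $\mu$ consistent through the recursion, and recognizing at the outset that the induction must be strengthened to an arbitrary exponent $s$ so that the residual $Q$, which emerges weighted by $\mu^{s+1}$, can be fed back into the hypothesis. The one genuine verification is the propagation of the weighted mean--zero condition, and this rests entirely on the boundary behavior $\mu|_{\partial T}=0$, which kills the divergence term's boundary integral at every stage.
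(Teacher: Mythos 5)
Your proof is correct and takes essentially the same route as the paper: both arguments peel off one polynomial degree at a time via Lemma \ref{lemma301}, gaining a power of $\mu$ at each step, and bottom out in Lemma \ref{lemma3} once degree zero is reached, with the weighted mean--zero condition verified by the divergence theorem and $\mu|_{\p T}=0$. Your strengthened induction carrying the exponent $s$ is just the recursive packaging of the paper's explicit telescoping construction $\dive (\mu w_r+ \mu^2 w_{r-1} + \cdots + \mu^{r}w_1)= p- \mu^r p_0$, the only cosmetic difference being that the paper checks the mean--zero property once at the end (for $\mu^r p_0$) while you propagate it level by level.
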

\begin{proof}
Let $p_r=p$ and suppose we have  found $w_{r-j}\in L_{r-j}^1(\PST)$ for $0 \le j \le \ell-1$  and  $p_{r-j} \in \VV_{r-j}^2(\PST)$ for $0 \le j \le  \ell$ such that 
\begin{equation}\label{aux1001}
\dive (\mu^{j+1} w_{r-j})= \mu^j p_{r-j} - \mu^{j+1} p_{r-(j+1)} \qquad \text{ for all } 0 \le j  \le   \ell-1.
\end{equation}

We can then apply Lemma \ref{lemma301} to find   $w_{r-\ell}\in L_{r-\ell}^1(\PST)$ and $p_{r-(\ell+1)} \in \VV_{r- (\ell+1)}^2(\PST)$ such that
\begin{equation}\label{aux1001}
\dive (\mu^{\ell+1} w_{r-\ell})= \mu^\ell p_{r-\ell} - \mu^{\ell+1} p_{r-(\ell+1)}.
\end{equation}
Hence, by induction we  can find $w_{r-j}\in L_{r-j}^1(\PST)$ for $0 \le j \le r-1$  and  $p_{r-j} \in \VV_{r-j}^2(\PST)$ for $0 \le j \le  r$ such that \eqref{aux1001} holds.  Therefore, 
\begin{equation*}
\Div(\mu w_r+ \mu^2 w_{r-1} + \cdots + \mu^{r}w_1)= p- \mu^r p_0.
\end{equation*}
We have that $\int_T \mu^r p_0=0$ and hence by Lemma \ref{lemma3}
we can find $w_0 \in L_0^1(\PST)$ such that $\dive (\mu^{r+1} w_0)= \mu^r p_0$. 
The result follows after setting $v=\mu w_{r}+ \mu^2 w_{r-1}+ \cdots \mu^{r} w_1+ \mu^{r+1} w_0$.
\end{proof}

We have several corollaries that follow from Theorem \ref{mainthm}.
First we show that the analogous result without boundary conditions is satisfied.
\begin{corollary}\label{cor1}
For each $p \in V_r^2(\PST)$ there exists a $v \in L_{r+1}^{1}(\PST)$ such that $\dive v=p$.
\end{corollary}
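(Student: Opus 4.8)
The goal is Corollary \ref{cor1}: the divergence map $\dive\colon L_{r+1}^1(\PST)\to V_r^2(\PST)$ is surjective, without any boundary conditions imposed. The plan is to deduce this from Theorem \ref{mainthm}, which already handles the case \emph{with} boundary conditions and restricted to mean-zero pressures. The difference between the two statements is twofold: the target space $V_r^2(\PST)$ need not have vanishing mean, and it need not be continuous at the singular vertices $z_4,z_5,z_6$; correspondingly the preimage $v$ need not vanish on $\partial T$.

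\begin{myproof}{Corollary \ref{cor1}}
Let $p\in V_r^2(\PST)$ be given. The first step is to reduce to a continuous-at-the-singular-vertices pressure by applying Lemma \ref{lemma1} with $s=0$: there exist $\psi\in L_1^1(\PST)$ and $\gamma\in \VV_r^2(\PST)$ such that $p=\Div\psi+\gamma$. Thus it suffices to find a preimage for $\gamma\in\VV_r^2(\PST)$, since adding $\psi$ recovers a preimage for $p$ (and $\psi\in L_1^1(\PST)\subset L_{r+1}^1(\PST)$ for $r\ge 0$). The second step is to subtract off the mean. Write $\bar\gamma=\frac{1}{|T|}\int_T\gamma$ and set $\gamma_0=\gamma-\bar\gamma\in\mathring{\VV}_r^2(\PST)$. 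Theorem \ref{mainthm} applies to $\gamma_0$ and produces $v_0\in\mathring{L}_{r+1}^1(\PST)$ with $\Div v_0=\gamma_0$. The remaining task is to realize the constant $\bar\gamma$ as a divergence, which is elementary: the field $w=\tfrac{1}{2}\bar\gamma\,\bx$ (or any linear field with divergence equal to $\bar\gamma$) is a global polynomial of degree one, hence lies in $L_{r+1}^1(\PST)$, and $\Div w=\bar\gamma$. Setting $v=\psi+v_0+w$ gives $\Div v=\Div\psi+\gamma_0+\bar\gamma=\Div\psi+\gamma=p$, and $v\in L_{r+1}^1(\PST)$ as required.
\end{myproof}

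The only point needing care is the polynomial-degree bookkeeping: $\psi$ has degree $1$ and $w$ has degree $1$, both of which are $\le r+1$ once $r\ge 0$, while $v_0$ is in $L_{r+1}^1(\PST)$ directly from Theorem \ref{mainthm}; so the sum indeed lands in $L_{r+1}^1(\PST)$. I do not expect a genuine obstacle here — the whole content of the surjectivity statement is already carried by Theorem \ref{mainthm}, and the corollary merely strips away the boundary conditions and the mean-value constraint by the two explicit corrections above. If one prefers to avoid Lemma \ref{lemma1}, an alternative is to note that $V_r^2(\PST)=\VV_r^2(\PST)\oplus C$ where $C$ is a complement spanned by functions supported near the singular vertices whose divergence-preimages are built directly from the construction in Lemma \ref{lemma1}; but routing everything through Lemma \ref{lemma1} and Theorem \ref{mainthm} as above is the cleanest path.
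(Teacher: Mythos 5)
Your proof is correct and follows essentially the same route as the paper's: both apply Lemma \ref{lemma1} (with $s=0$) to reduce to a function in $\VV_r^2(\PST)$, correct the mean by the divergence of the linear field $\tfrac12 c\, x$, and then invoke Theorem \ref{mainthm} on the resulting mean-zero part. The only difference is cosmetic ordering — the paper subtracts $\dive\psi$ from $g$ before applying the theorem, while you subtract the constant $\bar\gamma$ directly — but these are the same correction.
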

\begin{proof}
Let $p \in V_r^2(\PST)$. By Lemma \ref{lemma1} there exists $w \in  L_1^1(\PST)$ and  $g \in \VV_r^2(\PST)$
with
\begin{equation*}
p=\dive w+ g.
\end{equation*}
We let $\psi= (\frac1{|T|}\int_T g) \frac{1}{2}x \in L_1^1(\PST)$ and hence $\int_T \dive \psi =\int_T g$. We then have 
\begin{equation*}
p=\dive (w+\psi)+ (g-\dive \psi). 
\end{equation*}
By Theorem \ref{mainthm} there exists a $\theta \in \mathring{L}_{r+1}^1(\PST)$ such that  $\dive \theta= g-\dive \psi$. Therefore, we have 
\begin{equation*}
p=\dive (w+\psi+\theta).
\end{equation*}
The proof is complete after we set $v=w+\psi+\theta$.
\end{proof}
\begin{corollary}\label{cor2}
For each $p \in \mathring{L}_{r}^2(\PST)$ (resp., $p\in L_r^2(\PST)$) there exists 
a $v \in \mathring{S}_{r+1}^{1}(\PST)$ (resp., $v\in S_{r+1}^1(\PST)$) such that $\dive v=p$.
Likewise for each $v \in \mathring{L}_r^1(\PST)$ (resp., $v\in L_r^1(\PST)$)
 there exists a $z \in \mathring{S}_{r+1}^{0}(\PST)$ (resp., $z\in S^0_{r+1}(\PST)$) such that $\rote\, z=v$.
\end{corollary}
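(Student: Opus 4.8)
The plan is to reduce all four cases to surjectivity results already in hand --- Theorem~\ref{mainthm}, Corollary~\ref{cor1}, and the exactness of the N\'ed\'elec sequences in Proposition~\ref{prop:NedelecExact} --- and then to obtain membership in the \emph{smooth} spaces essentially for free. The observation driving this is that $S_{r+1}^1(\PST)$ differs from the Lagrange space $L_{r+1}^1(\PST)$ only in demanding that $\dive v$ be continuous, and likewise $S_{r+1}^0(\PST)$ differs from $L_{r+1}^0(\PST)$ only in demanding that $\rote z$ be continuous. In each case the prescribed data --- a Lagrange pressure $p$ or a Lagrange velocity $v$ --- is itself a continuous piecewise polynomial, so any Lagrange-space solution of $\dive v = p$ or $\rote z = v$ automatically has continuous image under the differential operator and therefore lands in the smooth space. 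All that then remains is to verify that the homogeneous boundary conditions are inherited.

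For the divergence statement I would first take $p \in \mathring{L}_r^2(\PST)$. Every such $p$ is a continuous, mean-zero piecewise polynomial, hence in particular continuous at $z_4, z_5, z_6$, so $\mathring{L}_r^2(\PST) \subset \mathring{\VV}_r^2(\PST)$ and Theorem~\ref{mainthm} supplies $v \in \mathring{L}_{r+1}^1(\PST)$ with $\dive v = p$. Continuity of $p$ gives $\dive v = p \in C(T)$, while $p|_{\p T} = 0$ together with $v|_{\p T} = 0$ yields $v \in \mathring{S}_{r+1}^1(\PST)$. The case $p \in L_r^2(\PST)$ is the same argument, with Corollary~\ref{cor1} replacing Theorem~\ref{mainthm} and no boundary conditions imposed.

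For the rot statement the one genuine hypothesis is $\dive v = 0$, which is forced by $\dive \rote z \equiv 0$ and is implicit in the exact-sequence setting; I would make it explicit at the outset. Given a divergence-free $v \in \mathring{L}_r^1(\PST) \subset \mathring{V}_r^1(\PST)$, exactness of the second sequence in Proposition~\ref{prop:NedelecExact} yields $z \in \mathring{V}_{r+1}^0(\PST) = \mathring{L}_{r+1}^0(\PST)$ with $\rote z = v$; continuity of $v$ then gives $\rote z \in [C(T)]^2$, so $z \in S_{r+1}^0(\PST)$, and $z|_{\p T} = 0$ together with $\rote z = v = 0$ on $\p T$ promote this to $z \in \mathring{S}_{r+1}^0(\PST)$. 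The non-homogeneous case uses the first sequence of Proposition~\ref{prop:NedelecExact} identically. I do not anticipate any hard analytic step here: the substance is carried by the earlier surjectivity results, and the only items requiring real care are the bookkeeping of the continuity and boundary conditions and the flagging of the divergence-free requirement in the rot part.
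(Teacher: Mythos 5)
Your proof is correct and follows essentially the same route as the paper: reduce to Theorem \ref{mainthm} (resp.\ Corollary \ref{cor1}) for the divergence statement and to Proposition \ref{prop:NedelecExact} for the rot statement, then observe that continuity of the given data together with the boundary conditions automatically places the Lagrange-space solution in the corresponding smooth space. Your write-up is in fact slightly more careful than the paper's, which leaves the divergence-free hypothesis and the smooth-space upgrade largely implicit and contains a harmless index slip (writing $\mathring{V}_r^0(\PST)$ and $\mathring{S}_r^0(\PST)$ where $\mathring{V}_{r+1}^0(\PST)$ and $\mathring{S}_{r+1}^0(\PST)$ are meant).
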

\begin{proof}
Let $p \in \mathring{L}_r^2(\PST) \subset \mathring{\VV}_r^2(\PST)$ and we can apply Theorem \ref{mainthm} to find $v \in \mathring{L}_{r+1}^1(\PST)$ such that 
$\dive v= p$. However, clearly  $v \in \mathring{S}_{r+1}^{1}(\PST)$. 

Next, let $v\in \mathring{L}_r^1(\PST)\subset V_r^1(\PST)$ be divergence--free.
Proposition \ref{prop:NedelecExact} shows that there exists $z\in \mathring{V}_r^0(\PST)$
such that $\rot  z = v$.  Since $v$ is continuous and vanishes 
on the boundary, we have $\rote \, z\in [C(T)]^2$ and $z|_{\p T} =0,$ $\rote\, z|_{\p T}=0$.
Thus $z\in \mathring{S}_r^0(\PST)$ by definition.

This proof applies {\em mutatis mutandis} to the  statements without boundary conditions. 
\end{proof}

\begin{remark}
To summarize, Proposition \ref{prop:NedelecExact}, Theorem \ref{mainthm},
and Corollaries \ref{cor1} and \ref{cor2} show that the following two sets of sequences are exact:
\begin{align*}
\bbR \lrarrow &L_r^0(\PST) \overset{\rot}{\lrarrow} V_{r-1}^1(\PST) \overset{\dive}{\lrarrow} V_{r-2}^2(\PST) \lrarrow 0, \\
\bbR \lrarrow &S_r^0(\PST) \overset{\rot}{\lrarrow} L_{r-1}^1(\PST) \overset{\dive}{\lrarrow} V_{r-2}^2(\PST) \lrarrow 0, \\
\bbR \lrarrow &S_r^0(\PST) \overset{\rot}{\lrarrow} S_{r-1}^1(\PST) \overset{\dive}{\lrarrow} L_{r-2}^2(\PST) \lrarrow 0,
\end{align*}
%
and
%
\begin{align*}
0 \lrarrow &\mathring{L}_r^0(\PST) \overset{\rot}{\lrarrow} \mathring{V}_{r-1}^1 (\PST) \overset{\dive}{\lrarrow} \mathring{V}_{r-2}^2(\PST) \lrarrow 0,\\
0 \lrarrow &\mathring{S}_r^0(\PST) \overset{\rot}{\lrarrow} \mathring{L}_{r-1}^1 (\PST) \overset{\dive}{\lrarrow} \mathring{\VV}_{r-2}^2(\PST) \lrarrow 0,\\
0 \lrarrow &\mathring{S}_r^0(\PST) \overset{\rot}{\lrarrow} \mathring{S}_{r-1}^1 (\PST) \overset{\dive}{\lrarrow} \mathring{L}_{r-2}^2(\PST) \lrarrow 0. 
\end{align*}
\end{remark}

\subsection{Dimension Counting}
We can easily count the dimensions of the smooth spaces $S_r^{k}(\PST)$
via the rank--nullity theorem and the exactness of sequences ($k=0,1$):
\begin{alignat*}{1}
\dim S_r^k(\PST)= & \dim \range S_r^k (\PST)+ \dim \ker S_r^k(\PST) \\
= & \dim \ker L_{r-1}^{k+1}(\PST)+   \dim \ker L_r^k(\PST) \\
=& \dim L_{r-1}^{k+1}(\PST)-  \dim \range L_{r-1}^{k+1}(\PST) +  \dim   L_r^k(\PST)- \range L_r^k(\PST) \\
= & \dim L_{r-1}^{k+1}(\PST)+ \dim L_r^k(\PST) -   \dim \ker  V_{r-2}^{k+2}(\PST) -  \dim \ker V_{r-1}^{k+1}(\PST)  \\
= & \dim L_{r-1}^{k+1}(\PST)+ \dim L_r^k(\PST) -  \dim V_{r-1}^{k+1} (\PST).
\end{alignat*}
Now we easily find
\begin{align*}
\dim L_r^k(\PST) 
%
%
 = \binom{2}{k}\big[3r^2+3r+1\big],\quad
 \dim V_r^k(\PST) 
= \left\{
\begin{array}{ll}
3r^2+3r+1 & k=0,\\
6r^2+12r+6 & k=1,\\
3r^2+9r+6 & k=2.
\end{array}
\right.
\end{align*}
Thus, we have
\[
\dim S_r^k(\PST)
 = \left\{
\begin{array}{ll}
3r^2-3r+3 & k=0,\\
6r^2+3 & k=1,\\
3r^2+3r+1 &k=2.
\end{array}
\right.
\]

%
%
Similar calculations also show that
\begin{align*}
\dim \mathring{S}_r^k(\PST) &= 
\left\{
\begin{array}{ll}
3(r-2)(r-3) & k=0,\\
{6(r-1)(r-2)} & k=1,\\
{3r(r-1)}  &k=2.
\end{array}
\right.
\end{align*}

\section{Commuting Projections on a Macro Triangle}\label{sec:commuting_projections}
In this section we define commuting projections. In order to do so, we give the 
degrees of freedom for $C^1$ polynomials
on a line segment. Let $a < m < b$, and define the space
\begin{equation*}
W_r(\{a, m, b\})= \{ v \in C^1([a, b]): v|_{[a, m]} \in \pol_r([a, m]) \text{ on } v|_{[m, b]} \in \pol_r([m, b]) ).   
\end{equation*}
The classical degrees of freedom for $W_r(\{a, m, b\})$ is given in the next result. 
\begin{lemma}\label{lem:edge1}
Let $r \ge 1$.  A function $z \in W_r(\{a,m,b\})$ is uniquely determined by the following degrees of freedom. 
\begin{alignat*}{1}
z(a), z(b) & \\
z'(a), z' (b) & \quad \text{ if } r \ge 2, \\
z(m), z'(m)  & \quad \text{ if } r \ge 3, \\
\int_{a}^m z(x) q(x)  & \quad \text{for all } q \in \pol_{r-4}([a,m]), \\
\int_{m}^b z(x) q(x)  & \quad \text{for all } q \in \pol_{r-4}([m,b]) .  
\end{alignat*}
\end{lemma}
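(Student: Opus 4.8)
The plan is to verify that the proposed functionals are valid degrees of freedom by a standard dimension-count-plus-unisolvence argument. First I would compute the dimension of $W_r(\{a,m,b\})$. Each piece is a polynomial of degree $\le r$, contributing $r+1$ scalars, for a total of $2(r+1)$ before imposing continuity. The $C^1$ matching at the interior node $m$ imposes two conditions (matching value and first derivative), so $\dim W_r(\{a,m,b\}) = 2(r+1) - 2 = 2r$. I would then count the listed functionals: the endpoint values $z(a),z(b)$ give $2$; the endpoint derivatives $z'(a),z'(b)$ give $2$ once $r\ge 2$; the interior value and derivative $z(m),z'(m)$ give $2$ once $r\ge 3$; and the two moment conditions against $\pol_{r-4}$ each contribute $\dim \pol_{r-4}([a,m]) = r-3$ functionals once $r\ge 4$ (and none otherwise, by the convention $\pol_s=\{0\}$ for $s<0$). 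The total is then a piecewise expression in $r$ which I would check agrees with $2r$ in each of the ranges $r=1,2,3,\ge 4$; for instance when $r\ge 4$ one gets $2+2+2+2(r-3)=2r$.

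Since the number of functionals equals the dimension, it suffices to prove unisolvence, namely that the only $z\in W_r(\{a,m,b\})$ annihilated by all the listed functionals is $z\equiv 0$. So I would suppose $z$ vanishes under every functional and argue that each polynomial piece is zero. On the left subinterval $[a,m]$, the conditions $z(a)=z'(a)=z(m)=z'(m)=0$ say that $z|_{[a,m]}$ is a degree $\le r$ polynomial with double roots at both $a$ and $m$, hence it factors as $z|_{[a,m]} = (x-a)^2(x-m)^2\, p(x)$ for some $p\in\pol_{r-4}([a,m])$. Testing against $q=p$ in the moment condition $\int_a^m z\,q =0$ gives $\int_a^m (x-a)^2(x-m)^2\, p(x)^2\,dx = 0$; since the weight $(x-a)^2(x-m)^2$ is nonnegative and positive on the open interval, this forces $p\equiv 0$, so $z|_{[a,m]}=0$. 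The identical argument on $[m,b]$, using $z(m)=z'(m)=z(b)=z'(b)=0$ and the right moment condition, yields $z|_{[m,b]}=0$, so $z\equiv 0$.

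The one subtlety, and the part deserving care rather than genuine difficulty, is handling the low-degree cases $r=1,2,3$ where not all functionals are present, so that the factorization argument degenerates. For these I would treat them directly: when $r=1$ each piece is affine and $z(a),z(b)$ together with $C^1$ continuity already pin it down (the $2r=2$ functionals suffice); when $r=2$ adding $z'(a),z'(b)$ suffices; when $r=3$ the data $z(a),z'(a),z(m),z'(m)$ already overdetermine a cubic on $[a,m]$ to be zero and likewise on $[m,b]$. In each case the vanishing of all prescribed functionals forces the Hermite-type interpolant to be trivial, and since the counts match $2r$, unisolvence follows. Assembling the dimension count with unisolvence in every range of $r$ completes the proof.
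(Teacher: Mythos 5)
The first thing to note is that the paper contains no proof of this lemma at all: it is introduced as ``the classical degrees of freedom,'' and only the companion Lemma~\ref{lem:edge2} is proved (in the appendix). So your argument is not competing with a proof in the paper; it supplies one the authors delegate to the spline literature. What you propose is the standard argument, and it is essentially correct: the dimension count $\dim W_r(\{a,m,b\})=2(r+1)-2=2r$ matches the number of functionals in every range of $r$, and for $r\ge 3$ unisolvence follows from the double-root factorization $z|_{[a,m]}=(x-a)^2(x-m)^2p(x)$ with $p\in\pol_{r-4}([a,m])$, testing against $q=p$ and using positivity of the weight on the open interval (and symmetrically on $[m,b]$); for $r=3$ the count of roots with multiplicity already forces each cubic piece to vanish. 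Note that in these cases each piece is handled independently and the $C^1$ coupling at $m$ is never needed.

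The one place where you assert rather than prove is $r=2$, and it is precisely the case where the coupling at $m$ is indispensable, so it deserves the two lines it takes. With all four endpoint data zero, the pieces are $z|_{[a,m]}=c_1(x-a)^2$ and $z|_{[m,b]}=c_2(x-b)^2$, and nothing so far kills $c_1,c_2$. The $C^1$ matching at $m$ gives $c_1(m-a)^2=c_2(m-b)^2$ and $c_1(m-a)=c_2(m-b)$; if $c_2\neq 0$, dividing yields $m-a=m-b$, contradicting $a<b$, so $c_1=c_2=0$. (Equivalently: $z'$ is a continuous piecewise linear hat function vanishing at $a$ and $b$ with $\int_a^b z'=z(b)-z(a)=0$, hence $z'\equiv 0$ and then $z\equiv 0$.) Inserting this computation closes the only gap; the rest of your proof, including the bookkeeping with the convention $\pol_s=\{0\}$ for $s<0$, is sound.
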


Other degrees of freedom are given in the next lemma.
Its proof is found in the appendix.
\begin{lemma}\label{lem:edge2}
Let $r \ge 1$.  A function $z \in W_r(\{a, m, b\})$ is uniquely determined by the following degrees of freedom. 
\begin{subequations}
\begin{alignat}{1}
z(a), z(b) & \label{1d:rgeq1}\\
\int_{a}^m z(x) q(x) & \quad \text{for all } q \in \pol_{r-2}([a,m]), \label{1d:rgeq2a}\\
\int_{m}^b z(x) q(x) & \quad \text{for all } q \in \pol_{r-2}([m,b]) .\label{1d:rgeq2b}  
\end{alignat}
\end{subequations}
\end{lemma}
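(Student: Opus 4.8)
The plan is to prove unisolvence by a dimension-counting plus injectivity argument. First I would count both the dimension of the space $W_r(\{a,m,b\})$ and the number of prescribed degrees of freedom, and check they agree. A function in $W_r(\{a,m,b\})$ consists of two polynomials of degree $\le r$ glued with $C^1$ continuity at $m$, so its dimension is $2(r+1) - 2 = 2r$. On the degree-of-freedom side, \eqref{1d:rgeq1} supplies $2$ conditions, while \eqref{1d:rgeq2a} and \eqref{1d:rgeq2b} each supply $\dim \pol_{r-2} = r-1$ conditions (interpreting $\pol_{r-2} = \{0\}$ when $r<2$, consistent with the paper's convention). The total is $2 + 2(r-1) = 2r$, matching $\dim W_r$. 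Since the number of functionals equals the dimension, it suffices to show that the only $z \in W_r(\{a,m,b\})$ annihilating all the listed functionals is $z \equiv 0$.

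So suppose $z$ vanishes under every degree of freedom. The key step is to exploit the two interior $L^2$-orthogonality conditions cleverly, because they do not immediately pin down the endpoint data at $m$. My approach is to integrate by parts to convert the moment conditions into information about $z$ and its low-order derivatives at the nodes. Concretely, on the subinterval $[a,m]$ the condition $\int_a^m z q = 0$ for all $q \in \pol_{r-2}([a,m])$ says $z|_{[a,m]}$ is $L^2$-orthogonal to $\pol_{r-2}$; since $z|_{[a,m]}\in\pol_r$, this forces $z|_{[a,m]}$ to lie in the two-dimensional orthogonal complement, spanned by the degree-$r$ and degree-$(r-1)$ Legendre-type polynomials on $[a,m]$. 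The same holds on $[m,b]$. Thus on each side $z$ is determined up to two free parameters, giving four parameters total, which must then be fixed using the remaining constraints: the two endpoint values $z(a)=z(b)=0$ from \eqref{1d:rgeq1}, together with the two $C^1$ matching conditions $z(m^-)=z(m^+)$ and $z'(m^-)=z'(m^+)$ built into the definition of $W_r$.

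I would then show that these four linear conditions on the four parameters force all of them to vanish. The cleanest route is to argue on each subinterval separately: write $z|_{[a,m]} = \alpha\, L_r^{[a,m]} + \beta\, L_{r-1}^{[a,m]}$ where $L_k^{[a,m]}$ denotes the $k$-th shifted Legendre polynomial on $[a,m]$, and similarly for $[m,b]$, then assemble the $4\times 4$ system coming from $z(a)=0$, $z(b)=0$, and the two continuity conditions at $m$, and verify it is nonsingular. The main obstacle I anticipate is precisely verifying nonsingularity of this endpoint/matching system—showing that the orthogonal-complement polynomials cannot satisfy homogeneous boundary and $C^1$ junction conditions simultaneously without being identically zero. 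This is where the specific structure of the Legendre polynomials (their nonvanishing endpoint values and the independence of value and derivative data) enters, and it is the only genuinely non-routine computation. An alternative, and perhaps slicker, route that avoids the explicit determinant is to deduce Lemma~\ref{lem:edge2} from the already-stated Lemma~\ref{lem:edge1} by exhibiting an explicit invertible change of basis between the two sets of functionals; for $r \ge 4$ one would relate the moments against $\pol_{r-2}$ to the moments against $\pol_{r-4}$ plus the nodal data $z(m), z'(m)$ via integration by parts, and handle the low-degree cases $r \in \{1,2,3\}$ directly, since there the space is small enough to check unisolvence by hand.
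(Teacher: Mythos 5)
Your setup---counting $\dim W_r(\{a,m,b\}) = 2(r+1)-2 = 2r$ against the $2r$ listed functionals, reducing unisolvence to injectivity, and then using \eqref{1d:rgeq2a}--\eqref{1d:rgeq2b} to confine $z$ on each subinterval to the two-dimensional orthogonal complement of $\pol_{r-2}$ in $\pol_r$---is correct and is the same frame the paper uses. But the proposal stops exactly where the content of the lemma lies. Having reduced to a $4\times 4$ homogeneous system (two endpoint conditions plus the two $C^1$ matching conditions at $m$ acting on four Legendre coefficients), you write that you ``would then show'' the system is nonsingular and you yourself flag this as ``the only genuinely non-routine computation,'' yet that computation never appears; the appeal to ``nonvanishing endpoint values and the independence of value and derivative data'' of Legendre polynomials is not a precise statement. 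Nonsingularity of that system \emph{is} the lemma, so as written this is a reduction of Lemma \ref{lem:edge2} to an unproved claim, i.e., a genuine gap.

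For comparison, here is how the paper closes precisely this step. Rather than keeping four parameters, it uses the vanishing at $a,b$ and continuity at $m$ to reduce to a single scalar unknown: $z$ on each subinterval equals $z(m)$ times the pullback of the unique $\psi\in\pol_r([0,1])$ with $\psi(0)=1$, $\psi(1)=0$, and $\int_0^1\psi p=0$ for all $p\in\pol_{r-2}([0,1])$. The $C^1$ condition at $m$ then reads $z(m)\,\psi'(0)\,\bigl(\tfrac{1}{a-m}-\tfrac{1}{b-m}\bigr)=0$; the factor in parentheses is nonzero because $a-m<0<b-m$, and the crux, $\psi'(0)\neq 0$, is proved by an integration-by-parts trick: if $\psi'(0)=0$, then $\int_0^1\psi' p=-\int_0^1\psi p'=0$ for every $p\in\pol_{r-1}$ with $p(0)=0$, and taking $p=\psi'$ gives $\int_0^1|\psi'|^2=0$, contradicting $\psi(0)\neq\psi(1)$. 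Your Legendre route can be completed in the same spirit (e.g., via the endpoint formulas $L_k(\pm 1)=(\pm 1)^k$, $L_k'(1)=k(k+1)/2$, which show the relevant combination has nonzero derivative at the opposite endpoint), but some such argument must actually be carried out. Finally, note that your alternative route---deducing Lemma \ref{lem:edge2} from Lemma \ref{lem:edge1} by an ``invertible change of basis''---is circular as sketched: it would require expressing the functionals $z\mapsto z(m)$, $z'(m)$, $z'(a)$, $z'(b)$ as combinations of \eqref{1d:rgeq1}--\eqref{1d:rgeq2b}, and the ability to do that on $W_r$ is equivalent to the unisolvence being proved; integration by parts produces moments of $z'$ from nodal values and moments of $z$, not the nodal derivative data you would need.
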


\begin{lemma}\label{lem:FactorC1}
Suppose that $q\in S_r^0(\PST)$ with $q|_{\p T}=0$.
Then $q = \mu p$ for some $p\in L_{r-1}^0(\PST)$,
and $p|_{T(z_{3+i})}\in C^1(T(z_{3+i}))$, $i=1,2,3$.
\end{lemma}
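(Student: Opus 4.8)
The plan is to first produce the factorization from Remark~\ref{rem:factoring} and then to upgrade the regularity of the factor $p$ one interior edge at a time. Since $q\in S_r^0(\PST)$ satisfies $q|_{\p T}=0$, Remark~\ref{rem:factoring} immediately gives some $p\in \pol_{r-1}(\PST)$ with $q=\mu p$; what remains is to show that $p$ is continuous across every interior edge (so that $p\in L_{r-1}^0(\PST)$) and that $p$ is additionally $C^1$ across each radial edge $[z_0,z_{3+i}]$. Two geometric facts drive the argument. First, by the very definition of $\nab\mu_i$ (cf.~\eqref{mun}), $\mu$ restricts to a \emph{single} linear polynomial on the whole region $T(z_{3+i})=[z_0,z_{i+1},z_{i+2}]$, since the linear function equal to $1$ at $z_0$ and $0$ at $z_{i+1},z_{i+2}$ automatically vanishes at the interior edge point $z_{3+i}$. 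Second, along any of the six interior edges $\mu$ is continuous and runs linearly from $\mu(z_0)=1$ to $0$ at the boundary endpoint, hence $\mu>0$ on the relative interior of each such edge.

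For continuity, I would fix an interior edge $\sigma'$ shared by two triangles $K_1,K_2\in\PST$ and write $p_j=p|_{K_j}$. Because $q\in S_r^0(\PST)\subset C(T)$, we have $\mu p_1=\mu p_2$ on $\sigma'$; since $\mu>0$ on the relative interior of $\sigma'$ this forces $p_1=p_2$ there, and hence $p_1=p_2$ on all of $\sigma'$ by continuity of polynomials. As the six edges through $z_0$ are the only interior edges of $\PST$, this shows $p\in L_{r-1}^0(\PST)$.

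The crux is the $C^1$ statement across $\sigma:=[z_0,z_{3+i}]$, whose two adjacent triangles are exactly $\mathcal{T}(z_{3+i})=\{K_1,K_2\}$. Here I would argue by divisibility in the polynomial ring $\bbR[x_1,x_2]$. Let $\lambda_\sigma$ be a nonzero linear form vanishing on the line through $\sigma$, and set $D=q_1-q_2$ and $P=p_1-p_2$, viewed as polynomials. Because $q$ is $C^1$ on $T$ (recall $\rot q\in [C(T)]^2$ means $\nab q\in[C(T)]^2$), both $D$ and $\nab D$ vanish on $\sigma$; a nonzero polynomial vanishing on a segment is divisible by the form cutting out its line, and the vanishing of the normal derivative then upgrades this to $\lambda_\sigma^2\mid D$. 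On the other hand, $\mu$ equals a single linear polynomial $\ell$ throughout $T(z_{3+i})$, so $D=\ell P$ as polynomials. Since $\ell$ cuts out the line through $e_i$ while $\lambda_\sigma$ cuts out the line through $[z_0,z_{3+i}]$, these two linear forms are not proportional, so $\lambda_\sigma\nmid\ell$. As $\lambda_\sigma$ is prime, $\lambda_\sigma^2\mid \ell P$ together with $\lambda_\sigma\nmid\ell$ gives $\lambda_\sigma^2\mid P$, equivalently $P$ and $\nab P$ vanish on $\sigma$, i.e.\ $p$ is $C^1$ across $\sigma$. I expect this to be the main obstacle, since it is the only step where the special Powell--Sabin geometry is essential: the collinearity of $z_{i+1},z_{3+i},z_{i+2}$ is what makes $\mu$ a single linear polynomial on $T(z_{3+i})$ and hence cancellable. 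Across the radial edges $[z_0,z_j]$ to the original vertices $\mu$ has distinct gradients on the two sides, so no analogous $C^1$ conclusion holds there, consistent with the statement claiming $C^1$ regularity only on the regions $T(z_{3+i})$.
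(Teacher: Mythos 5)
Your proof is correct, and while it shares the paper's starting point (the factorization $q=\mu p$ from Remark \ref{rem:factoring}, followed by the same observation that $\mu>0$ on the relative interiors of the interior edges forces $p\in L_{r-1}^0(\PST)$), your key $C^1$ step takes a genuinely different route. The paper differentiates the identity $q=\mu p$ to get $\mu\nab p = \nab q - p\nab \mu$ on $T(z_{3+i})$, notes that the right-hand side is continuous there precisely because $\nab\mu$ is constant on $T(z_{3+i})$ (the same collinearity fact you isolate), and then divides by $\mu$, which is positive in the interior of $T(z_{3+i})$, to conclude that $\nab p$ is continuous. You instead work with the jump polynomials across $\sigma=[z_0,z_{3+i}]$: the $C^1$ continuity of $q$ gives $\lambda_\sigma^2 \mid (q_1-q_2)=\ell\,(p_1-p_2)$, and since $\lambda_\sigma$ and $\ell$ are non-proportional linear forms, primality of $\lambda_\sigma$ in $\bbR[x_1,x_2]$ yields $\lambda_\sigma^2\mid (p_1-p_2)$, i.e.\ first-order matching of $p_1$ and $p_2$ along $\sigma$. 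Both arguments hinge on the identical geometric input---that $\mu$ restricts to a single linear polynomial on $T(z_{3+i})$ because $z_{i+1}, z_{3+i}, z_{i+2}$ are collinear---so they are cousins rather than strangers; but the paper's analytic version is shorter, while your algebraic version has the minor advantage that the conclusion holds on the entire line through $\sigma$ as a polynomial identity, so no limiting or density remark is needed at the points where $\mu$ vanishes (such as $z_{3+i}$ itself), a detail the paper's division-by-$\mu$ argument leaves implicit.
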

\begin{proof}
The statement $q = \mu p$ is a direct consequence of Remark \ref{rem:factoring}.
Because $q$ and $\mu$ are continuous, 
it follows that $p$ is continuous, i.e., $p\in L_{r-1}^0(\PST)$.
We also have $\nab q = \mu \nab p+ p\nab \mu$, and therefore
\[
\mu \nab p|_{T(z_{3+i})} = \big(\nab q - p \nab\mu \big)|_{T(z_{3+i})}.
\]
Since $\nab \mu$ is constant on $T(z_{3+i})$, we find
that $\mu \nab p|_{T(z_{3+i})}$ is continuous. Because
$\mu$ is positive in the interior of $T(z_{3+i})$, we conclude
that $\nab p$ is continuous on $T(z_{3+i})$.
\end{proof}

We are now ready to give degrees of freedom (DOFs) for functions in $S_r^0(\PST)$. 
\begin{lemma}\label{lem:Sr0}
A function $q \in S_r^0(\PST)$, with $r\ge 2$, is uniquely determined by 
\begin{subequations}\label{S0r}
\begin{alignat}{3}
&q(z_i), \nab q(z_i)  \qquad  && 1 \le i \le 3, \quad &&\text{($9$ DOFs)}\label{S0r:a}\\
&q(z_{3+i}), \partial_t q(z_{3+i})  \qquad  &&1 \le i \le 3, \text{ if } r \ge 3,\quad &&\text{($6$ DOFs)} \label{S0r:c}\\
&\int_{e} \partial_n q \, p  \qquad  &&\forall p \in \pol_{r-3}(e),\ e \in \mathcal{E}^b(\PST),\quad &&\text{($6(r-2)$ DOFs)}  \label{S0r:d}\\
&\int_{e } q  p  \qquad   &&\forall p \in \pol_{r-4}(e),\ e \in \mathcal{E}^b(\PST),\quad &&\text{($6(r-3)$ DOFs)}  \label{S0r:e}\\
&\int_{T} \rot q  \cdot \rot p  \qquad &&\forall p \in \mathring{S}_{r}^0(\PST),\quad &&{\text{($3(r-2)(r-3)$ DOFs)}} \label{S0r:f}
\end{alignat}
\end{subequations}
\end{lemma}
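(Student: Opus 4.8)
The plan is to argue by unisolvence: a direct count (recorded in the annotations to \eqref{S0r}) gives
\[
9 + 6 + 6(r-2) + 6(r-3) + 3(r-2)(r-3) = 3r^2 - 3r + 3 = \dim S_r^0(\PST),
\]
so the number of functionals equals the dimension of the space. Hence it suffices to show that if $q \in S_r^0(\PST)$ is annihilated by all the functionals in \eqref{S0r}, then $q = 0$. I will extract the boundary data of $q$ edge by edge, conclude that $q \in \mathring{S}_r^0(\PST)$, and then close the argument with the interior functionals \eqref{S0r:f}.

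First I would show that $q$ and its tangential derivative vanish on $\p T$. Fix a macro-edge $e_i = [z_{i+1},z_{i+2}]$ with interior point $z_{3+i}$. Since $q \in C^1(T)$, the restriction $q|_{e_i}$ is a $C^1$ piecewise polynomial of degree $\le r$ with a single breakpoint at $z_{3+i}$, i.e. $q|_{e_i}\in W_r(\{z_{i+1},z_{3+i},z_{i+2}\})$. The functionals \eqref{S0r:a} give $q(z_{i+1})=q(z_{i+2})=0$ and $\partial_t q(z_{i+1})=\partial_t q(z_{i+2})=0$; \eqref{S0r:c} gives $q(z_{3+i})=\partial_t q(z_{3+i})=0$ (when $r\ge 3$); and \eqref{S0r:e} gives $\int q\,\kappa=0$ for $\kappa\in\pol_{r-4}$ on each of the two boundary sub-edges of $e_i$. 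These are exactly the degrees of freedom of Lemma~\ref{lem:edge1}, so that result forces $q|_{e_i}=0$. As this holds for each $i$, we obtain $q=0$ and $\partial_t q = 0$ on $\p T$.

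The main obstacle is the normal derivative, since treating $\partial_n q|_{e_i}$ as a merely continuous piecewise polynomial leaves one functional short. The remedy is to factor and exploit extra regularity. By Lemma~\ref{lem:FactorC1} we may write $q = \mu\, w$ with $w \in L_{r-1}^0(\PST)$ and $w|_{T(z_{3+i})}\in C^1(T(z_{3+i}))$. On $e_i$ one has $\mu = 0$, so $\nab q = w\,\nab\mu$ there, and by \eqref{mun} this yields $\partial_n q = (\nab\mu_i\cdot n_i)\,w = -|\nab\mu_i|\,w$ on $e_i$, with $|\nab\mu_i|$ a positive constant along $e_i$. The key point is that the $C^1$ regularity of $w$ on the patch $T(z_{3+i})$ makes $\partial_t w$ continuous across $z_{3+i}$, so $w|_{e_i}\in W_{r-1}(\{z_{i+1},z_{3+i},z_{i+2}\})$ is itself a $C^1$ piecewise polynomial. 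The vertex data \eqref{S0r:a} give $\partial_n q(z_{i+1})=\partial_n q(z_{i+2})=0$, hence $w(z_{i+1})=w(z_{i+2})=0$, while \eqref{S0r:d} gives $\int_e \partial_n q\,\kappa = -|\nab\mu_i|\int_e w\,\kappa = 0$ for all $\kappa\in\pol_{r-3}(e)$ on each sub-edge, i.e. the $\pol_{r-3}$-moments of $w$ vanish. These are precisely the degrees of freedom of Lemma~\ref{lem:edge2} applied with degree $r-1$, so $w|_{e_i}=0$, and therefore $\partial_n q = 0$ on $e_i$. Combining the two steps shows $q=0$ and $\nab q = 0$ on $\p T$, equivalently $q=0$ and $\rot q = 0$ on $\p T$, so $q \in \mathring{S}_r^0(\PST)$.

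Finally, since $q \in \mathring{S}_r^0(\PST)$, I would take $p = q$ in the interior functionals \eqref{S0r:f} to obtain $\int_T |\rot q|^2 = 0$, whence $\rot q = 0$ and $q$ is constant on the connected set $T$. The boundary condition $q|_{\p T}=0$ then gives $q=0$, completing the proof. The delicate step is the normal-derivative argument of the third paragraph; once the factorization $q=\mu w$ and the edgewise $C^1$ regularity of $w$ are in hand, the one-dimensional unisolvence lemmas do the rest.
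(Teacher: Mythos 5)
Your proof is correct and follows essentially the same route as the paper's: the dimension count, Lemma \ref{lem:edge1} to force $q|_{\partial T}=0$, the factorization $q=\mu w$ from Lemma \ref{lem:FactorC1} together with Lemma \ref{lem:edge2} (at degree $r-1$) to kill the normal derivative, and \eqref{S0r:f} to conclude $\rot q=0$. The only cosmetic difference is that you write the relation $\partial_n q=-|\nabla\mu_i|\,w$ explicitly via \eqref{mun}, where the paper reaches the same moment conditions by noting that $\partial_n\mu$ is constant on each boundary sub-edge.
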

\begin{proof}
{The number of DOFs given is $3r^2-3r+3 = \dim S_r^0(\PST)$.}
We will show that the only function $q$ for which \eqref{S0r:a}--\eqref{S0r:f} are equal to zero must be zero on $T$.

Combining \eqref{S0r:a},  \eqref{S0r:c} and \eqref{S0r:e}, $q$ satisfies all conditions of Lemma \ref{lem:edge1} on each edge of $T$, so $q \equiv 0$ on $\partial T$. By Lemma \ref{lem:FactorC1},  there exists a $p$ which is a piecewise polynomial of degree $r-1$,
and is $C^1$ on edges, such that $q = \mu p$, and $\nabla q|_{\p T} = (p \nabla \mu + \mu \nabla p)|_{\p T} = p \nab \mu|_{\p T}$. 
Then \eqref{S0r:a} yields $p(z_i) = 0$ for $1 \leq i \leq 3$.  
Also \eqref{S0r:d} yields $\int_e p w\partial_n \mu = 0$ for all $w \in \pol_{r-3}(e)$ and for all $e \in \mathcal{E}^b(\PST)$. Since $\partial_n \mu$ is constant on each edge $e \in \mathcal{E}^b(\PST)$, we have $\int_e p w = 0$ for all $w \in \pol_{r-3}(e)$, and using Lemma \ref{lem:edge2}, it follows that $p \equiv 0$ on $\partial T$.  Thus $q\in \mathring{S}_r^0(\PST)$
and condition \eqref{S0r:f} yields $\rot q = 0$ on $T$. Then $q$ is constant and so must be equal to zero on $T$.
\end{proof}

\begin{lemma}\label{lem:Lr1DOFs}
A function $v \in L_r^1(\PST)$ is uniquely determined by 
\begin{subequations}\label{L1r}
\begin{alignat}{3}
v(z_i), & \qquad  && 1 \le i \le 3, \label{L1r:a}\quad &&\text{($6$ DOFs)},\\
\int_{e_i} (v \cdot n_i)  & \qquad  && \text{ if } r=1, \label{L1r:b}\\
\jmp{\dive v}(z_{3+i}) & \qquad  && 1 \le i \le 3, \label{L1r:c}\quad && \text{($3$ DOFs)},\\
v(z_{3+i}) \cdot n_i   & \qquad  && 1 \le i \le 3,  \text{ if } r \ge 2, \label{L1r:d}\quad &&\text{($3$ DOFs)},\\
\int_{e} v \cdot w & \qquad  &&  \forall w \in [\pol_{r-2}(e)]^2, \text{ for all } e \in \mathcal{E}^b(\PST),  \label{L1r:e}\quad &&\text{($12(r-1)$ DOFs)},\\%
\int_{T} v \cdot \rot w & \qquad && \forall w \in \mathring{S}_{r+1}^0(\PST) \label{L1r:g},\quad &&{\text{($3(r-1)(r-2)$ DOFs)}},\\
\int_{T} \dive v \, w & \qquad && \forall w \in \mathring{\VV}_{r-1}^2(\PST) \label{L1r:h},\quad &&{\text{($3r(r+1)-4$ DOFs)}}.
\end{alignat}
\end{subequations}
\end{lemma}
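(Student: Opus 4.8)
The plan is to prove unisolvence. Since the number of listed conditions equals $\dim L_r^1(\PST)=6r^2+6r+2$ (a direct tally of $6,3,3,12(r-1),3(r-1)(r-2),3r(r+1)-4$ for \eqref{L1r:a}, \eqref{L1r:c}, \eqref{L1r:d}, \eqref{L1r:e}, \eqref{L1r:g}, \eqref{L1r:h}), it suffices to show that the only $v\in L_r^1(\PST)$ annihilating every degree of freedom is $v\equiv 0$. I would split this into a boundary stage, forcing $v|_{\p T}=0$ from \eqref{L1r:a}--\eqref{L1r:e}, and an interior stage, forcing $v\equiv 0$ from \eqref{L1r:g}--\eqref{L1r:h}.

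For the boundary stage, fix an edge $e_i$ whose two halves meet at $z_{3+i}$, and decompose $v|_{e_i}$ into its parts $v\cdot n_i$ and $v\cdot t_i$. Testing \eqref{L1r:e} against $q\,n_i$ with $q\in\pol_{r-2}$ and using the endpoint values from \eqref{L1r:a} and the value \eqref{L1r:d} at $z_{3+i}$, a degree $r$ polynomial is determined on each half by two endpoint values and its moments against $\pol_{r-2}$, so $v\cdot n_i\equiv 0$ on $e_i$ (for $r=1$ the single condition \eqref{L1r:b} plays this role). Testing \eqref{L1r:e} against $q\,t_i$ and using \eqref{L1r:a} shows $v\cdot t_i$ vanishes at the macro-vertices and has vanishing $\pol_{r-2}$ moments on each half, so $v\cdot t_i = c_i\,g$ on $e_i$, where $c_i:=v(z_{3+i})\cdot t_i$ is the one remaining unknown and $g$ is the fixed continuous piecewise polynomial equal to $1$ at $z_{3+i}$ and $0$ at the endpoints. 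The key observation here is that $g$ cannot be $C^1$ at $z_{3+i}$: if it were, then $g$ would satisfy all hypotheses of Lemma \ref{lem:edge2} with vanishing data, forcing $g\equiv 0$ and contradicting $g(z_{3+i})=1$. Hence $g$ has a genuine kink at $z_{3+i}$.

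The crux is to tie this kink to the jump condition \eqref{L1r:c}. Let $\mathcal{T}(z_{3+i})=\{K_1,K_2\}$ with shared interior edge $\sigma=[z_{3+i},z_0]$, and set $\delta:=v|_{K_1}-v|_{K_2}$. Since $v$ is continuous across $\sigma$, we have $\delta=\lambda\,p$ for a linear form $\lambda$ vanishing on $\sigma$ and $p\in[\pol_{r-1}(T)]^2$, so the scalar part of $\jmp{\dive v}(z_{3+i})$ is $\dive\delta(z_{3+i})=\nab\lambda\cdot p(z_{3+i})$. Writing $\dive\delta=\p_{t_i}(\delta\cdot t_i)+\p_{n_i}(\delta\cdot n_i)$ in the orthonormal frame, the point is that $\delta\cdot n_i$ vanishes on \emph{both} lines $e_i$ (because $v\cdot n_i\equiv0$ there forces each piece, hence $\delta\cdot n_i$, to vanish on the whole line) and $\sigma$; thus $\nab(\delta\cdot n_i)(z_{3+i})=0$ and the normal term drops out. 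What survives is the jump in the tangential derivative $\p_{t_i}(v\cdot t_i)$ at $z_{3+i}$, which by the kink of $g$ is a nonzero multiple of $c_i$. Therefore \eqref{L1r:c} forces $c_i=0$ for $i=1,2,3$, and together with $v\cdot n_i\equiv0$ we get $v|_{\p T}=0$, i.e.\ $v\in\mathring{L}_r^1(\PST)$. I expect this divergence-jump computation at the singular vertex to be the main obstacle, since it is the one place where the special Powell--Sabin geometry genuinely enters.

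For the interior stage, $v\in\mathring{L}_r^1(\PST)$ gives $\dive v\in\mathring{\VV}_{r-1}^2(\PST)$, so choosing $w=\dive v$ in \eqref{L1r:h} yields $\int_T(\dive v)^2=0$ and $v$ is divergence--free. By Corollary \ref{cor2} there is $z\in\mathring{S}_{r+1}^0(\PST)$ with $\rot z=v$, and taking $w=z$ in \eqref{L1r:g} gives $\int_T|v|^2=\int_T v\cdot\rot z=0$, whence $v\equiv 0$. (When $r=1$ the conditions \eqref{L1r:g} are vacuous, but then $\mathring{S}_{2}^0(\PST)=\{0\}$, so a divergence--free function in $\mathring{L}_1^1(\PST)$ already vanishes and the argument closes.) This establishes unisolvence and completes the proof.
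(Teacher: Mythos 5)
Your proof is correct, and your interior stage --- taking $w=\dive v$ in \eqref{L1r:h} once $v|_{\p T}=0$ is established, then writing $v=\rot z$ with $z\in\mathring{S}_{r+1}^0(\PST)$ via Corollary \ref{cor2} and testing \eqref{L1r:g} --- is exactly the paper's argument. Your boundary stage, however, takes a genuinely different route. The paper decomposes $v$ on the whole two-triangle patch $T(z_{3+i})$ in the \emph{oblique} frame $v=a_i t_i+b_i s_i$, where $s_i$ is tangent to the interior edge $[z_0,z_{3+i}]$: since $b_i$ is continuous across that edge, $\jmp{\p_{s_i}b_i}(z_{3+i})=0$, so the jump DOF \eqref{L1r:c} is used \emph{first} and instantly converts into the statement that $a_i|_{e_i}$ is $C^1$ at $z_{3+i}$; Lemma \ref{lem:edge2} then kills $a_i$ directly, and an endpoint/midpoint/moment argument kills $b_i$. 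You instead stay in the orthonormal frame on $e_i$, kill $v\cdot n_i$ first, reduce $v\cdot t_i$ to $c_i g$ for the normalized kink function $g$ (certifying $\jmp{g'}(z_{3+i})\neq0$ by a contrapositive use of Lemma \ref{lem:edge2}), and invoke \eqref{L1r:c} \emph{last}; this obliges you to prove the extra fact that the normal component cannot pollute the divergence jump, which you do correctly: your $\delta=v|_{K_1}-v|_{K_2}$ satisfies that $\delta\cdot n_i$ vanishes on the line containing $e_i$ (each polynomial piece vanishes on a segment of that line, hence on all of it) and on the line containing $[z_0,z_{3+i}]$ (by continuity of $v$), so it is divisible by the product of the two linear forms and has zero gradient at $z_{3+i}$. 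Both proofs exploit the singular-vertex geometry --- the paper through the choice of frame, you through this two-line factorization. The paper's version is shorter and needs no auxiliary function $g$; yours buys a sharper picture of exactly which trace mode the divergence-jump DOF controls, namely the single residual tangential parameter $c_i=v(z_{3+i})\cdot t_i$, which explains why the boundary DOF count closes. One cosmetic point: your dimension tally as written is the $r\ge2$ tally; for $r=1$ the count $6+3+3+2=14=\dim L_1^1(\PST)$ also balances, with \eqref{L1r:b} replacing \eqref{L1r:d} and with \eqref{L1r:e}, \eqref{L1r:g} empty.
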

\begin{proof}
The number of degrees of freedom given is $6r^2+6r+2$ which equals the dimension
of $L_r^1(\PST)$.  We show that if $v\in L_r^1(\PST)$ vanishes on \eqref{L1r}, then $v$ is identically zero.

Recall that $T(z_{3+i}) = T_{2i+1}\cup T_{2i+2}$ is the union of
two triangles that have $z_{3+i}$ as a vertex, and $n_i$
and $t_i$ are, respectively, the outward normal and unit tangent
vectors of the edge $e_i = \p T\cap \p T(z_{3+i})$.
Let $s_i$ be a unit vector that is tangent to the interior 
edge $[z_0,z_{3+i}]$, which is necessarily linearly independent of $t_i$.
Thus we may write
\[
v|_{T(z_{3+i})} = a_i t_i + b_i s_i
\]
for some $a_i,b_i\in L_r^0(\PST)|_{T(z_{3+i})}$.  We then see that 
\begin{align*}
\Div v|_{T(z_{i+3})} = \p_{t_i} a_i + \p_{s_i} b_i.
\end{align*}
Because $b_i$ is continuous on $T(z_{3+i})$ we have that  $\jmp{\p_{s_i} b_i}(z_{i+3})=0$ and hence $0=\jmp{\Div v} (z_{3+i}) = [\p_{t_i} a_i](z_{3+i})$. 
Therefore $a_i|_{e_i}$ is $C^1$ on $e_i$.  To continue, we  split the proof into two step.

\noindent{\em Case $r=1$:}\\
By the first set of DOFs \eqref{L1r:a}, there holds $a_i(z_j) = b_i(z_j)=0$
for $j=i+1,i+2$.  Because $a_i|_{e_i}$ is piecewise linear and $C^1$,
we conclude that $a_i=0$ on $e_i$.
Next,  using \eqref{L1r:b} yields
\[
\int_{e_i} b_i (s_i\cdot n_i) = 0.
\]
Because $s_i\cdot n_i\neq 0$, we conclude that $\int_{e_i} b_i = 0$.
Since $b_i$ vanishes at the endpoints of $e_i$, and since $b_i$ is piecewise
linear on $e_i$, we conclude that $b_i=0$ on $e_i$, and therefore $v|_{\p T}=0$,
i.e., $v\in \mathring{L}_1^1(\PST)$.  Corollary \ref{cor1} and \eqref{L1r:h} 
then shows that $\Div v\equiv 0$.  Finally, Corollary \ref{cor2} and \eqref{L1r:g} yields $v\equiv 0$.\smallskip

\noindent{\em Case $r\ge 2$:}\\
Again, there holds $a_i(z_j) = b_i(z_j)=0$
by the first set of DOFs \eqref{L1r:a}.  
Combining Lemma \ref{lem:edge2} 
with the DOFs \eqref{L1r:e}, noting that $a_i$ is $C^1$ on $e_i$,
then yields $a_i=0$ on $e_i$.  Likewise
the DOFs \eqref{L1r:a}, \eqref{L1r:e}, and \eqref{L1r:d}
show that $b_i=0$ on $e_i$.  We conclude that $v|_{\p T}=0$,
and therefore, using \eqref{L1r:g}--\eqref{L1r:h} and the same arguments as the $r=1$ case,
we get $v\equiv 0$.
\end{proof}

\begin{lemma}\label{lem:V2r}
A function $q \in V_r^2(\PST)$ is uniquely determined by 
\begin{subequations}\label{V2r}
\begin{alignat}{3}
\jmp{ q  }(z_{3+i})  & \qquad  && 1 \le i \le 3,&& \qquad \text{($3$ DOFs)},  \label{V2r:a} \\
\int_T q  & \qquad && \quad && \qquad \text{(1 DOF)}, \label{V2r:b} \\
\int_T q p   & \qquad && \forall p \in \mathring{\VV}_r^2(\PST), && \qquad {\text{($3(r+1)(r+2)-4$ DOFs)}}, \label{V2r:c}
\end{alignat}
\end{subequations}
\end{lemma}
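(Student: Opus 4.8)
The plan is a standard unisolvence argument. The listed functionals are linear functionals on the finite--dimensional space $V_r^2(\PST)$, and their cardinality matches $\dim V_r^2(\PST)$, so it suffices to show that any $q\in V_r^2(\PST)$ annihilated by all of them is identically zero. First I would record the count:
\[
3 + 1 + \big(3(r+1)(r+2) - 4\big) = 3(r+1)(r+2) = \dim V_r^2(\PST),
\]
using the dimension formula from the dimension--counting subsection ($\dim V_r^2(\PST) = 3r^2+9r+6 = 3(r+1)(r+2)$). This confirms the bookkeeping is consistent and reduces the lemma to proving that vanishing of \eqref{V2r} forces $q\equiv 0$.

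So suppose $q\in V_r^2(\PST)$ kills every functional in \eqref{V2r}. The heart of the argument is to recognize each block for what it is. Unravelling the jump, $\jmp{q}(z_{3+i}) = \big(q_1(z_{3+i})-q_2(z_{3+i})\big)m_1$, so the vanishing of \eqref{V2r:a} says precisely that $q$ is continuous at each of $z_4,z_5,z_6$; by the definition of that space this is exactly the statement $q\in\VV_r^2(\PST)$. The vanishing of \eqref{V2r:b} then gives $\int_T q = 0$, i.e.\ $q\in\mathring{L}^2(T)$, whence $q\in\VV_r^2(\PST)\cap\mathring{L}^2(T)=\mathring{\VV}_r^2(\PST)$.

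With this membership established the last block closes the proof immediately: since $q\in\mathring{\VV}_r^2(\PST)$, we may legitimately take $p=q$ as a test function in \eqref{V2r:c}, which yields $\int_T q^2 = 0$ and hence $q\equiv 0$. The one point worth checking carefully — which I would flag as the substance of the argument rather than a genuine obstacle — is the dimension accounting behind \eqref{V2r:c}, namely $\dim\mathring{\VV}_r^2(\PST) = 3(r+1)(r+2)-4$. This holds because the three point--continuity conditions cutting out $\VV_r^2(\PST)$ inside $V_r^2(\PST)$ are independent functionals (each detects a discontinuity at a distinct vertex $z_{3+i}$) and the zero--mean condition is independent of them (the constants lie in $\VV_r^2(\PST)$ yet have nonzero mean), so these conditions drop the dimension by $3$ and then by $1$. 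Once the count is in place, the self--testing choice $p=q$ makes unisolvence transparent.
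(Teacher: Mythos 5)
Your proof is correct and takes essentially the same route as the paper's: vanishing of \eqref{V2r:a} gives continuity at $z_4,z_5,z_6$, so $q\in\VV_r^2(\PST)$; vanishing of \eqref{V2r:b} puts $q\in\mathring{\VV}_r^2(\PST)$; and taking $p=q$ in \eqref{V2r:c} forces $q\equiv 0$. Your explicit verification of the dimension counts (both that the DOFs total $\dim V_r^2(\PST)=3(r+1)(r+2)$ and that $\dim\mathring{\VV}_r^2(\PST)=3(r+1)(r+2)-4$) is a sound addition that the paper leaves implicit.
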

\begin{proof}
If $q \in V_r^2(\PST)$ is such that \eqref{V2r:a} are zero then $q$ is continuous at $z_{3+i}$ for $1 \leq i \leq 3$. Then \eqref{V2r:b} yields that $q \in \mathring{\VV}_r^2(\PST)$, and it follows from \eqref{V2r:c} that $q \equiv 0$ on $T$.
\end{proof}

\begin{lemma}\label{lem:S1r}
A function $v \in S_r^1(\PST)$ is uniquely determined by the following degrees of freedom.
\begin{subequations}
\label{S1r}
\begin{alignat}{3}
&v(z_i),\ \Div v(z_i)\qquad &&1\le i\le 3,\qquad &&\text{($9$ DOFs)},\label{S1r:a} \\
&\int_{e_i} v\cdot n_i\qquad &&1 \le i \le 3, \ \text{if $r=1$},\label{S1r:b}\\
&v(z_{3+i})\cdot n,\ \Div v(z_{3+i})\qquad &&1\le i\le 3,\ \text{if $r\ge 2$}\qquad &&\text{($6$ DOFs)},\label{S1r:c} \\
&\int_e v\cdot w\qquad &&\forall w\in [\pol_{r-2}(e)]^2,\ e\in \mathcal{E}^b(\PST),\quad &&\text{($12(r-1)$ DOFs)},\label{S1r:d} \\
&\int_e (\Div v)q\qquad &&\forall q\in \pol_{r-3}(e),\ e\in \mathcal{E}^b(\PST),\quad &&\text{($6(r-2)$ DOFs)},\label{S1r:e}\\
&\int_T  v \cdot \rot q,  \quad &&\forall q \in \mathring{S}_{r+1}^0(\PST)\qquad &&{\text{($3(r-1)(r-2)$ DOFs)}},\label{S1r:f}\\
&\int_T (\dive v)  q,  \quad &&\forall q \in \mathring{L}_{r-1}^2(\PST)\qquad &&{\text{($3(r-1)(r-2)$ DOFs)}.\label{S1r:g}}
\end{alignat}
\end{subequations}
\end{lemma}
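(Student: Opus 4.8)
The plan is to prove unisolvence along the same lines as Lemma~\ref{lem:Lr1DOFs}, the only structural difference being that for $v\in S_r^1(\PST)$ the divergence $\dive v$ is globally continuous; consequently the jump functionals $\jmp{\dive v}(z_{3+i})$ used there vanish identically and are replaced by the point values and edge moments of $\dive v$ in \eqref{S1r:a}, \eqref{S1r:c}, and \eqref{S1r:e}. First I would verify that the number of functionals equals $\dim S_r^1(\PST)=6r^2+3$ when $r\ge 2$; for $r=1$ the divergence point values in \eqref{S1r:a} are redundant, since a continuous piecewise-constant $\dive v$ is a single global constant fixed by $\sum_i\int_{e_i}v\cdot n_i=\int_T\dive v$, so in that case it is enough that the functionals separate points. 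It then suffices to show that a $v\in S_r^1(\PST)$ annihilating \eqref{S1r:a}--\eqref{S1r:g} vanishes identically.

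To obtain $v|_{\p T}=0$ I would argue exactly as in Lemma~\ref{lem:Lr1DOFs}: on $T(z_{3+i})$ write $v=a_it_i+b_is_i$ with $s_i$ tangent to $[z_0,z_{3+i}]$, so that $\dive v=\p_{t_i}a_i+\p_{s_i}b_i$. Continuity of $b_i$ gives $\jmp{\p_{s_i}b_i}(z_{3+i})=0$, and since $\dive v$ is now continuous, $\jmp{\p_{t_i}a_i}(z_{3+i})=0$, i.e.\ $a_i|_{e_i}$ is $C^1$. The vertex values \eqref{S1r:a} give $a_i(z_j)=b_i(z_j)=0$ for $j=i+1,i+2$; because $\{t_i,s_i\}$ is a basis, the moments \eqref{S1r:d} are equivalent to the vanishing of $\int a_iq$ and $\int b_iq$ against $\pol_{r-2}$ on each half-edge, so Lemma~\ref{lem:edge2} forces $a_i\equiv 0$, while \eqref{S1r:c} gives $b_i(z_{3+i})=0$ (using $t_i\cdot n_i=0\neq s_i\cdot n_i$) and one-dimensional Lagrange unisolvence forces $b_i\equiv 0$ (for $r=1$ the midpoint condition is supplied by \eqref{S1r:b}). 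Next, on each $e_i$ the scalar $\dive v$ is continuous and piecewise of degree $r-1$, with vanishing endpoint values from \eqref{S1r:a}, vanishing value at $z_{3+i}$ from \eqref{S1r:c}, and vanishing moments against $\pol_{r-3}$ from \eqref{S1r:e}; these are precisely the Lagrange degrees of freedom on each half-edge, so $\dive v|_{\p T}=0$. Hence $v\in\mathring{S}_r^1(\PST)$.

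For the interior I would use the two $L^2$-pairings. Since $v\in\mathring{S}_r^1(\PST)$, its divergence is continuous, vanishes on $\p T$, and has zero mean because $\int_T\dive v=\int_{\p T}v\cdot n=0$; thus $\dive v\in\mathring{L}_{r-1}^2(\PST)$, and taking $q=\dive v$ in \eqref{S1r:g} yields $\int_T(\dive v)^2=0$, so $\dive v\equiv 0$. Then $v\in\mathring{L}_r^1(\PST)$ is divergence-free, so Corollary~\ref{cor2} supplies $z\in\mathring{S}_{r+1}^0(\PST)$ with $\rot z=v$; choosing $q=z$ in \eqref{S1r:f} gives $\int_T|v|^2=\int_T v\cdot\rot z=0$, whence $v\equiv 0$.

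I expect the main obstacle to lie in the boundary step: one must check that the continuity of $\dive v$ (in place of a prescribed jump) still forces the tangential coordinate $a_i$ to be $C^1$ across the singular vertex $z_{3+i}$, and then correctly route the vector moments \eqref{S1r:d} into the two different one-dimensional unisolvence statements---Lemma~\ref{lem:edge2} for the $C^1$ coordinate $a_i$ and ordinary Lagrange unisolvence for the merely continuous $b_i$ and for $\dive v$---while tracking the $r=1$ degeneracy. Once $v\in\mathring{S}_r^1(\PST)$ is established, the interior argument is routine given Corollary~\ref{cor2} and the $L^2$-pairings.
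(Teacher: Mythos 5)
Your proposal is correct and follows essentially the same route as the paper: reduce the boundary part to the argument of Lemma \ref{lem:Lr1DOFs} (noting the jump functionals \eqref{L1r:c} vanish automatically since $\dive v$ is continuous), then show $\dive v\in\mathring{L}_{r-1}^2(\PST)$ and kill it via \eqref{S1r:g}, and finally use Corollary \ref{cor2} together with \eqref{S1r:f} to conclude $v\equiv 0$. The only differences are cosmetic: you inline the $a_i t_i+b_i s_i$ edge argument rather than citing it, and you explicitly address the $r=1$ redundancy that the paper's dimension count glosses over.
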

\begin{proof}
If $v$ vanishes at the DOFs, then $v\in S^1_r(\PST)\subset L^1_r(\PST)$
vanishes on \eqref{L1r:a}--\eqref{L1r:e}.  The proof
of Lemma \ref{lem:Lr1DOFs} then shows
that $v|_{\p T}=0$, and therefore $\int_T \Div v = 0$.
Using \eqref{S1r:a},\eqref{S1r:c}, and \eqref{S1r:e},
we also find that $\Div v|_{\p T}=0$, i.e., $\Div v\in \mathring{L}_{r-1}^2(\PST)$.
The DOFs \eqref{S1r:g} yield $\Div v=0$ in $T$, and therefore $v = \rot q$ for some $q\in \mathring{S}_{r+1}^0(\PST)$
by Corollary \ref{cor2}.  Finally \eqref{S1r:f} gives $v\equiv 0$.
{Noting that the number of DOFs is $6r^2+3$, the dimension
of $S_r^1(\PST)$, we conclude that \eqref{S1r} form
a unisolvent set over $S_r^1(\PST)$.}
\end{proof}

\begin{lemma}\label{lem:Lr2}
Let $q \in L_r^2(\PST)$ with $r \ge 1$. Then $q$ is uniquely determined by the following degrees of freedom.
\begin{subequations}\label{Lr2}
\begin{alignat}{3}
&q(z_i) \quad &&1 \leq i \leq 3, \qquad &&\qquad \text{($3$ DOFs),}\label{Lr2:a}\\
&q(z_{3+i}) \quad && 1 \leq i \leq 3, &&\qquad \text{($3$ DOFs),}\label{Lr2:b}\\
&\int_e qp  \quad &&\forall p \in \pol_{r-2}(e), \quad  e \in \mathcal{E}^b(\PST), && \qquad \text{($6(r-1)$ DOFs),}\label{Lr2:c}\\
&\int_T q \quad && &&\qquad \text{($1$ DOF),}\label{Lr2:c2}\\
&\int_T qp   \quad && \forall p \in \mathring{L}_r^2(\PST), &&\qquad{ \text{($3r(r-1)$ DOFs)}}. \label{Lr2:d}
\end{alignat}
\end{subequations}
\end{lemma}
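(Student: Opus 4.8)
The plan is to prove unisolvence in the standard way: first check that the number of functionals equals $\dim L_r^2(\PST) = 3r^2+3r+1$, and then show that any $q\in L_r^2(\PST)$ annihilated by all of \eqref{Lr2:a}--\eqref{Lr2:d} must vanish identically. Counting the listed degrees of freedom gives $3 + 3 + 6(r-1) + 1 + 3r(r-1) = 3r^2+3r+1$, which matches the dimension, so it suffices to establish the vanishing implication.

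Assume then that $q$ vanishes at every functional. First I would show $q|_{\p T}=0$. Since $q\in L_r^2(\PST)=L_r^0(\PST)$ is continuous and piecewise polynomial, its restriction to each boundary edge $e\in \mathcal{E}^b(\PST)$ is a single polynomial in $\pol_r(e)$. The two endpoints of any such $e$ lie in $\{z_1,\dots,z_6\}$, so \eqref{Lr2:a} and \eqref{Lr2:b} force $q$ to vanish at both endpoints of $e$, while \eqref{Lr2:c} forces all moments $\int_e q\,p$, $p\in\pol_{r-2}(e)$, to vanish. The elementary one-dimensional fact that a degree-$r$ polynomial on a segment is determined by its two endpoint values together with its moments against $\pol_{r-2}$ (a unisolvent set, since $2+(r-1)=r+1=\dim\pol_r(e)$) then yields $q|_e=0$. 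As $e$ was arbitrary, $q|_{\p T}=0$.

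Next I would upgrade this to membership in $\mathring{L}_r^2(\PST)$. By definition $\mathring{L}_r^2(\PST)=\mathring{L}_r^0(\PST)\cap\mathring{V}_r^2(\PST)$ consists of continuous piecewise polynomials that vanish on $\p T$ and have vanishing mean. We have just shown $q|_{\p T}=0$, and the single functional \eqref{Lr2:c2} supplies $\int_T q=0$; hence $q\in\mathring{L}_r^2(\PST)$. Since $q$ now lies in the test space of \eqref{Lr2:d}, taking $p=q$ there gives $\int_T q^2=0$, whence $q\equiv 0$.

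I expect no serious obstacle. The only point requiring care is the boundary-edge step, where one must observe that each edge of $\mathcal{E}^b(\PST)$ carries a single (unbroken) polynomial piece, so that the elementary endpoint-plus-moment unisolvence applies directly rather than the $C^1$ version of Lemma \ref{lem:edge2}. The role of \eqref{Lr2:c2} is likewise worth flagging: it is exactly the mean-value condition needed to place $q$ in $\mathring{L}_r^2(\PST)$, so that the interior moments \eqref{Lr2:d}, which only test against mean-zero functions, suffice to conclude.
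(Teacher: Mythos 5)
Your proposal is correct and follows essentially the same route as the paper's proof: edge DOFs \eqref{Lr2:a}--\eqref{Lr2:c} give $q|_{\p T}=0$, the mean condition \eqref{Lr2:c2} then places $q$ in $\mathring{L}_r^2(\PST)$, and \eqref{Lr2:d} (testing with $p=q$) forces $q\equiv 0$. The paper states these steps tersely; you have merely supplied the standard details (dimension count and one-dimensional endpoint-plus-moment unisolvence on each edge of $\mathcal{E}^b(\PST)$), all of which are accurate.
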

\begin{proof}
Let $q \in L_r^2(\PST)$ such that all DOFs \eqref{Lr2} are equal to zero. The conditions \eqref{Lr2:a}--\eqref{Lr2:c} yield that $q \equiv 0$ on $\partial T$. Therefore, using \eqref{Lr2:c2}, $q \in \mathring{L}_r^2(\PST)$, and by \eqref{Lr2:d}, $q \equiv 0$ on $T$.
\end{proof}

The next two theorems 
show that  projections induced by the degrees
of freedom given in Lemmas \ref{lem:Sr0}--\ref{lem:Lr2}
commute.  
\begin{theorem}\label{thm:Commute1}
Let $\Pi_0^r : C^\infty(T) \rightarrow S_r^0(\PST)$ be the projection induced by the DOFs \eqref{S0r}, that is,
\begin{align*}
\phi(\Pi_0^r p) &= \phi(p), \hspace{15pt} \forall \phi \in \text{DOFs in \eqref{S0r}}.
\end{align*}
Likewise, let $\Pi_1^{r-1} : {[C^\infty(T)]}^2 \rightarrow L_{r-1}^1(\PST)$ be the projection induced by the DOFs \eqref{L1r}, and let $\Pi_2^{r-2} : C^\infty(T) \rightarrow V_{r-2}^2(\PST)$ be the projection induced by the DOFs \eqref{V2r}. Then for $r \geq 2$, the following diagram commutes
\begin{center}
\begin{tikzcd}
  \mathbb{R} \arrow[r] 
    & C^\infty(T) \arrow[d, "\Pi^{r}_0"] \arrow [r,"\rot"] & {[C^\infty(T)]}^2 \arrow[d,"\Pi^{r-1}_1"] \arrow[r,"\dive"] & C^\infty(T) \arrow[r] \arrow[d,"\Pi^{r-2}_2"]& 0 \\
  \mathbb{R} \arrow[r] & S_{r}^0(\PST) \arrow[r,"\rot"] & L_{r-1}^1(\PST) \arrow[r,"\dive"] & V_{r-2}^2(\PST) \arrow[r] & 0.
 \end{tikzcd}
 \end{center}
In other words, we have for $r \ge 2$ 
\begin{subequations}\label{commut:nobdry}
\begin{alignat}{2}
\dive \Pi_1^{r-1}v = & \Pi_2^{r-2} \dive v, \quad && \forall v \in {[C^\infty(T)]^2}, \label{L1divV2} \\
\rot \Pi_0^r p = &\Pi_1^{r-1}  \rot p, \quad && \forall p \in C^\infty(T).\label{S0rotL1}
\end{alignat}
\end{subequations}
\end{theorem}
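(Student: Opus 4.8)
The plan is to prove each of \eqref{L1divV2} and \eqref{S0rotL1} by the standard commuting-diagram argument of finite element exterior calculus. The two sides $\dive\Pi_1^{r-1}v$ and $\Pi_2^{r-2}\dive v$ of \eqref{L1divV2} both lie in $V_{r-2}^2(\PST)$, while both sides of \eqref{S0rotL1} lie in $L_{r-1}^1(\PST)$; hence, by the unisolvence proved in Lemmas \ref{lem:V2r} and \ref{lem:Lr1DOFs}, it suffices to show that the two sides return identical values on every degree of freedom of the target space. I will use repeatedly that $\Pi_0^r p$ reproduces the DOFs \eqref{S0r} of $p$ and that $\Pi_1^{r-1}v$ reproduces the DOFs \eqref{L1r} (with $r$ replaced by $r-1$) of $v$, the pointwise identity $\dive\,\rote\,q=0$, and, for a boundary edge with unit tangent $t$ and normal $n$ oriented appropriately, the algebraic identities $\rote\,q\cdot n=\p_t q$ and $\rote\,q\cdot t=-\p_n q$. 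A crucial structural observation is that the degree shifts in the DOFs are arranged so that the test spaces match exactly after substitution: the space $\mathring{S}_{r}^0(\PST)$ appears in both \eqref{S0r:f} and in \eqref{L1r:g} for $\Pi_1^{r-1}$, and $\mathring{\VV}_{r-2}^2(\PST)$ appears in both \eqref{L1r:h} for $\Pi_1^{r-1}$ and in \eqref{V2r:c} for $\Pi_2^{r-2}$.

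For \eqref{L1divV2}, write $w=\Pi_1^{r-1}v$ and check the three DOF families of \eqref{V2r}. The jump DOFs \eqref{V2r:a} agree because $\jmp{\dive w}(z_{3+i})=\jmp{\dive v}(z_{3+i})$ is exactly the reproduced DOF \eqref{L1r:c}, and $\Pi_2^{r-2}\dive v$ reproduces $\jmp{\dive v}(z_{3+i})$ by definition. For the mean \eqref{V2r:b} I apply the divergence theorem, $\int_T\dive w=\sum_i\int_{e_i}w\cdot n_i$, and observe that $\int_{e_i}w\cdot n_i=\int_{e_i}v\cdot n_i$: when $r=2$ this is the reproduced DOF \eqref{L1r:b}, and when $r\ge 3$ it follows by using the constant test field $n_i\in[\pol_{r-3}(e)]^2$ in \eqref{L1r:e}; since $\Pi_2^{r-2}$ preserves the mean by \eqref{V2r:b}, the two means coincide. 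For the moment DOFs \eqref{V2r:c}, the identity $\int_T(\dive w)\phi=\int_T(\dive v)\phi$ for $\phi\in\mathring{\VV}_{r-2}^2(\PST)$ is precisely the reproduced DOF \eqref{L1r:h}, and equals $\int_T\Pi_2^{r-2}(\dive v)\phi$ by \eqref{V2r:c}. This exhausts the DOFs of $V_{r-2}^2(\PST)$ and proves \eqref{L1divV2}.

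For \eqref{S0rotL1}, set $u=\Pi_0^r p$, $\eta=\rote\,p$, and $w=\Pi_1^{r-1}\eta$, and compare the DOFs \eqref{L1r} of $\rote\,u$ and of $w$. The vertex DOFs \eqref{L1r:a} agree since $\rote\,u(z_i)$ is determined by $\nab u(z_i)=\nab p(z_i)$ via \eqref{S0r:a}, so $\rote\,u(z_i)=\eta(z_i)=w(z_i)$. The DOF \eqref{L1r:b} (when $r=2$) agrees because the fundamental theorem of calculus gives $\int_{e_i}\rote\,u\cdot n_i=\int_{e_i}\p_t u$, a difference of vertex values reproduced through \eqref{S0r:a}. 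The DOFs \eqref{L1r:c} and \eqref{L1r:h} return zero on each side, since $\dive\,\rote\,u=0$ and $\dive\eta=\dive\,\rote\,p=0$. The DOF \eqref{L1r:d} (when $r\ge 3$) agrees because $\rote\,u(z_{3+i})\cdot n_i=\p_t u(z_{3+i})$ is the reproduced DOF \eqref{S0r:c}. Finally, the rot--rot moment DOF \eqref{L1r:g} against $\mathring{S}_r^0(\PST)$ is exactly \eqref{S0r:f}, giving $\int_T\rote\,u\cdot\rote\,W=\int_T\rote\,p\cdot\rote\,W=\int_T\eta\cdot\rote\,W=\int_T w\cdot\rote\,W$.

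The step I expect to be the main obstacle, and would treat most carefully, is the edge moment DOF \eqref{L1r:e}, which couples tangential and normal behavior. For a vector test field $W\in[\pol_{r-3}(e)]^2$ on a boundary edge, I would decompose $W=W_n n+W_t t$ and use the identities above to write $\int_e\rote\,u\cdot W=\int_e(\p_t u)W_n-\int_e(\p_n u)W_t$. The normal-derivative term is reproduced directly by the normal-moment DOFs \eqref{S0r:d}, since $W_t\in\pol_{r-3}(e)$. For the tangential term I integrate by parts along the sub-edge: the boundary contributions involve $u$ evaluated at the endpoints $z_{i+1},z_{i+2}$ and at the split point $z_{3+i}$, reproduced by \eqref{S0r:a} and \eqref{S0r:c}, while the remaining integral pairs $u$ against $\p_t W_n\in\pol_{r-4}(e)$ and is reproduced by the edge-value moment DOFs \eqref{S0r:e}. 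Carrying out the same integration by parts for $p$ then yields $\int_e\rote\,u\cdot W=\int_e\eta\cdot W=\int_e w\cdot W$, with the degree count $\p_t:\pol_{r-3}(e)\to\pol_{r-4}(e)$ matching the available moments exactly. Having matched all DOFs of $L_{r-1}^1(\PST)$, unisolvence yields \eqref{S0rotL1}.
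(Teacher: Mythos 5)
Your proposal is correct and follows essentially the same route as the paper's proof: both identities are reduced, via the unisolvence results of Lemmas \ref{lem:V2r} and \ref{lem:Lr1DOFs}, to checking that the difference vanishes on each DOF family, using the divergence theorem for \eqref{V2r:b}, the identities $\rote\,q\cdot n=\p_t q$, $\rote\,q\cdot t=-\p_n q$ and $\dive\rote\,q=0$, and integration by parts along the sub-edges for \eqref{L1r:e} with exactly the degree matching $\p_t:\pol_{r-3}(e)\to\pol_{r-4}(e)$ against \eqref{S0r:d}--\eqref{S0r:e}. The only notable difference is cosmetic: you phrase the argument as matching DOF values of the two sides rather than showing the residual $\rho$ vanishes, and you correctly identify the interior test space as $\mathring{S}_r^0(\PST)$ where the paper's proof has a small typo ($\mathring{S}_{r-1}^0(\PST)$).
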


\begin{proof}

(i) \textit{Proof of} \eqref{L1divV2}. We take $v \in {[C^\infty(T)]^2}$. Since $\rho := \dive \Pi_1^{r-1} v - \Pi_2^{r-2} \dive v \in V^2_{r-2}(\PST)$, we only need to prove that $\rho$ vanishes at the DOFs \eqref{V2r}.   For the jump condition at points $z_{3+i}$ for $1 \leq i \leq 3$, we have
\begin{alignat*}{1}
\jmp{\rho}(z_{3+i}) &= \jmp{ \dive \Pi_1^{r-1} v - \Pi_2^{r-2} \dive v} (z_{3+i}) = \jmp{\dive \Pi_1^{r-1}v - \dive v}(z_{3+i})  = 0,
\end{alignat*}
where we have used the definitions of $\Pi_2^{r-2}$ and $\Pi_1^{r-1}$ along with the DOFs \eqref{V2r:a} and \eqref{L1r:c}.

For the interior DOFs, we have,
\begin{align*}
\int_T \rho 
&= \int_T \left(\dive \Pi_1^{r-1} v - \dive v\right) 
= \int_{\partial T} \left(\Pi_1^{r-1} v - v \right)\cdot n =0,
\end{align*}
where we have used the definitions of $\Pi_1^{r-1}$ and $\Pi_2^{r-2}$ and DOFs \eqref{V2r:b} and either \eqref{L1r:b} if $r = 2$ or \eqref{L1r:e} if $r \geq 3$. 
Finally, for any $p \in \mathring{\VV}_{r-2}^2(\PST)$,
\begin{align*}
\int_T \rho p  &= \int_T \left(\dive \Pi_1^{r-1} v - \Pi_2^{r-1} \dive v \right) p =0 
\end{align*}
by the definitions of $\Pi_1^{r-1}$ and $\Pi_2^{r-2}$ along with DOFs \eqref{V2r:c} and \eqref{L1r:h}. By Lemma \ref{lem:V2r}, $\rho$ is exactly zero on $T$, and the projections in \eqref{L1divV2} commute.\smallskip

(ii) \textit{Proof of} \eqref{S0rotL1}. Let $p \in C^\infty(T)$ and set $\rho := \rot \Pi_0^r p - \Pi_1^{r-1} \rot p \in L_{r-1}^1(\PST)$.
We will show that $\rho$ vanishes for all DOFs \eqref{L1r}.

First, for each vertex $z_i$ with $1 \leq i \leq 3$, 
\begin{align}\label{vertices0}
\rho(z_i) &= \rot \Pi_0^r p(z_i) - \Pi^{r-1}_1 \rot p(z_i) = \rot p(z_i) - \Pi^{r-1}_1 \rot p(z_i) = 0,
\end{align}
by \eqref{S0r:a} and \eqref{L1r:a}. Furthermore, at nodes $z_{3+i}$, we have by \eqref{L1r:c}
\begin{align*}
\jmp{\dive \rho}(z_{3+i}) &= \jmp{\dive \rot \Pi^r_0 p - \dive \Pi^{r-1}_1 \rot p}(z_{3+i}) \\
&= -\jmp{\dive \Pi^{r-1}_1 \rot p}(z_{3+i}) \\
&= -\jmp{\dive \rot p}(z_{3+i}) =0,
\end{align*}

For the DOFs on each edge $e \in \mathcal{E}^b(\PST)$, 
we will use that $\rot \varphi \cdot n = \partial_t \varphi$ and $\rot \varphi \cdot t = -\partial_n \varphi$.
Then we have, for $r\ge 3$,
\begin{align}\label{midpts0}
\begin{split}
	\rho(z_{3+i}) \cdot n_i &= \left(\rot \Pi_0^r p(z_{3+i})\right) \cdot n_i - \left(\Pi^{r-1}_1 \rot p(z_{3+i})\right) \cdot n_i \\
	&= \partial_t p(z_{3+i}) - \left(\Pi_1^{r-1} \rot p(z_{3+i})\right) \cdot n_i \\
	&= \partial_t p(z_{3+i}) - \rot p(z_{3+i})\cdot n_i =0 
\end{split}
\end{align}
by \eqref{S0r:c} and \eqref{L1r:d}. If $r = 2$ (so that $\rho \in L_1^1(\PST)$),
\begin{align*}
\int_{e_i} \rho \cdot n_i &= \int_{e_i} \left(\rot \Pi_r^0 p - \Pi_{r-1}^1 \rot p\right) \cdot n_i
= \int_{e_i} \partial_{t_i} \left(\Pi_r^0 p - p\right)=0
\end{align*}
by \eqref{L1r:b} and \eqref{S0r:a}, so \eqref{S0rotL1} is proved. 

Now let $r \geq 3$. We have, for all $q \in \pol_{r-3}(e)$ and for all $e \in \mathcal{E}^b(\PST)$, 
\begin{align*}
	\int_{e} (\rho \cdot n) q 
	&= \int_e \left(\rot \left(\Pi_0^r p - p\right) \cdot n \right) q \\
	&= \int_e \partial_t \left(\Pi_0^r p - p\right) q \\
	&= -\int_e \left(\Pi_0^r p - p\right) \partial_t q=0,
\end{align*}
by \eqref{L1r:e},  \eqref{S0r:c} and \eqref{S0r:e}. Likewise, for $q \in \pol_{r-3}(e)$,
\begin{align*}
	\int_e (\rho \cdot t) q &= \int_e \left(\left(\rot \Pi_0^r p - \Pi_1^{r-1} \rot p\right) \cdot t \right)q \\
	&= \int_e \left(\rot( \Pi_0^r p - p) \cdot t\right)  q\\
	&= \int_e -\partial_n \left(\Pi_0^r p - p\right) q=0
\end{align*}
by \eqref{L1r:e} and \eqref{S0r:d}.
For the interior DOFs, for any $w \in \mathring{S}_{r-1}^0(\PST)$, we have
\begin{align*}
\int_T \rho \cdot \rot w &= \int_T \left( \rot \Pi_0^r p - \Pi_1^{r-1} \rot p \right) \cdot \rot w =0 
\end{align*}
by \eqref{S0r:f} and \eqref{L1r:g}. Finally, for any $w \in \mathring{\VV}_{r-2}^2(\PST)$,
\begin{align*}
	\int_T \dive \rho \, w &= \int_T \dive\left(\rot \Pi_0^r p - \Pi_1^{r-1} \rot p \right) w  \\
	&= \int_T -\dive (\rot p) w  =0 
\end{align*}
where we used the DOF \eqref{L1r:h}. Therefore $\rho$ is equal to zero on $T$, and the identity \eqref{S0rotL1} is proved.
\end{proof}


The proof of the following result can be found in the appendix. 
\begin{theorem}\label{thm:Commute2}
Let $\Pi_0^r : C^\infty(T) \rightarrow S_r^0(\PST)$ be the projection induced by the DOFs \eqref{S0r}, that is,
\begin{align*}
\phi(\Pi_0^r p) &= \phi(p), \hspace{15pt} \forall \phi \in \text{DOFs in \eqref{S0r}}.
\end{align*}
Likewise, let $\varpi_1^{r-1} : {[C^\infty(T)]}^2 \rightarrow S_{r-1}^1(\PST)$ be the projection induced by the DOFs \eqref{S1r}, and let $\varpi_2^{r-2} : C^\infty(T) \rightarrow L_{r-2}^2(\PST)$ be the projection induced by the DOFs \eqref{Lr2}. Then for $r \geq 2$, the following diagram commutes
\begin{center}
\begin{tikzcd}
  \mathbb{R} \arrow[r] 
    & C^\infty(T) \arrow[d, "\Pi^{r}_0"] \arrow [r,"\rot"] & {[C^\infty(T)]}^2 \arrow[d,"\varpi^{r-1}_1"] \arrow[r,"\dive"] & C^\infty(T) \arrow[r] \arrow[d,"\varpi^{r-2}_2"]& 0 \\
  \mathbb{R} \arrow[r] & S_{r}^0(\PST) \arrow[r,"\rot"] & S_{r-1}^1(\PST) \arrow[r,"\dive"] & L_{r-2}^2(\PST) \arrow[r] & 0.
 \end{tikzcd}
 \end{center}
In other words, we have for $r \ge 2$ 
\begin{subequations}\label{commut:nobdry}
\begin{alignat}{2}
\rot \Pi_0^r p = &\varpi_1^{r-1}  \rot p, \quad && \forall p \in C^\infty(T), \label{S0rotS1} \\
\dive \varpi_1^{r-1}v = & \varpi_2^{r-2} \dive v, \quad && \forall v \in {[C^\infty(T)]^2}. \label{S1divL2}
\end{alignat}
\end{subequations}
\end{theorem}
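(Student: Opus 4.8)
The plan is to follow the template of Theorem \ref{thm:Commute1} verbatim: for each of the two identities I form the difference $\rho$ of the two sides, note that $\rho$ belongs to the relevant target space, and then show that $\rho$ is annihilated by \emph{every} degree of freedom defining that space. Unisolvence (Lemma \ref{lem:S1r} for \eqref{S0rotS1} and Lemma \ref{lem:Lr2} for \eqref{S1divL2}) then forces $\rho\equiv 0$. Throughout, the pointwise identity $\dive\rot = 0$ is the workhorse that makes all $\Div$-type degrees of freedom of the target space automatic, since $\rot\Pi_0^r p$ contributes nothing to them.

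For \eqref{S0rotS1} I set $\rho := \rot \Pi_0^r p - \varpi_1^{r-1}\rot p$; because $\rot(\Pi_0^r p)\in S_{r-1}^1(\PST)$ we have $\rho\in S_{r-1}^1(\PST)$, so it suffices to check the DOFs \eqref{S1r} with $r$ replaced by $r-1$. At the vertices, $\rho(z_i)=0$ follows from \eqref{S0r:a} (as $\rot$ uses only first derivatives) and \eqref{S1r:a}, while $\Div\rho(z_i)=0$ follows from $\dive\rot=0$ and \eqref{S1r:a}. The node and single-edge DOFs \eqref{S1r:c}, \eqref{S1r:b} are dispatched exactly as in \eqref{midpts0}, using $\rot\varphi\cdot n=\p_t\varphi$, $\rot\varphi\cdot t=-\p_n\varphi$, and \eqref{S0r:c}. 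The one genuinely new computation is the vector edge moment \eqref{S1r:d}: I decompose the test field as $w=w_n n+w_t t$ with $w_n,w_t\in\pol_{r-3}(e)$, integrate the normal contribution $\int_e \p_t(\Pi_0^r p - p)\,w_n$ by parts (endpoint and interior-node boundary terms vanish by \eqref{S0r:a} and \eqref{S0r:c}) and conclude by \eqref{S0r:e}, while the tangential contribution $\int_e \p_n(\Pi_0^r p - p)\,w_t$ dies directly by \eqref{S0r:d}. The divergence edge moment \eqref{S1r:e} and interior DOF \eqref{S1r:g} follow from $\dive\rot=0$ with \eqref{S1r:e}, \eqref{S1r:g}, and \eqref{S1r:f} follows from matching \eqref{S0r:f} against \eqref{S1r:f}.

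For \eqref{S1divL2} I set $\rho := \dive \varpi_1^{r-1} v - \varpi_2^{r-2}\dive v \in L_{r-2}^2(\PST)$ and check the DOFs \eqref{Lr2} with $r\to r-2$. The point values \eqref{Lr2:a}, \eqref{Lr2:b} reduce to the $\Div$-matching DOFs \eqref{S1r:a}, \eqref{S1r:c} for $\varpi_1^{r-1}$ paired with the value DOFs of $\varpi_2^{r-2}$; here it is essential that $\dive\varpi_1^{r-1}v$ is continuous (since $\varpi_1^{r-1}v\in S_{r-1}^1(\PST)$) so the nodal evaluations at $z_{3+i}$ are well defined. The edge moments \eqref{Lr2:c} reduce to \eqref{S1r:e}; the global mean \eqref{Lr2:c2} becomes, via the divergence theorem, $\int_{\p T}(\varpi_1^{r-1}v-v)\cdot n$, which vanishes edge-by-edge by the normal DOF of $\varpi_1^{r-1}$ (\eqref{S1r:b} when $r=2$, \eqref{S1r:d} when $r\ge 3$); and the interior moments \eqref{Lr2:d} reduce to \eqref{S1r:g}.

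I expect the principal difficulty to be bookkeeping rather than conceptual: one must track the degree shifts so that each target DOF is paired with the correct source DOF under $r\mapsto r-1\mapsto r-2$, and isolate the low-order cases—most notably $r=2$, where $L_0^2(\PST)=\pol_0(T)$ lies outside the range of Lemma \ref{lem:Lr2} (so \eqref{S1divL2} is verified through the single mean DOF alone) and where the edge DOFs of $S_1^1(\PST)$ revert from \eqref{S1r:d} to \eqref{S1r:b}. The lone computation demanding real care is the integration by parts in the vector edge moment \eqref{S1r:d} for \eqref{S0rotS1}, where all endpoint and interior-node boundary terms must be seen to cancel before invoking the scalar edge DOFs \eqref{S0r:d}--\eqref{S0r:e} of $S_r^0(\PST)$.
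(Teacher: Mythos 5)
Your proposal is correct, and its skeleton (form the difference $\rho$, kill every DOF, invoke unisolvence via Lemma \ref{lem:S1r} and Lemma \ref{lem:Lr2}) is exactly the paper's. The one genuine divergence is in how the first identity \eqref{S0rotS1} is handled on the DOFs that $S_{r-1}^1(\PST)$ shares with $L_{r-1}^1(\PST)$: the paper observes that \eqref{S1r:b}, the normal part of \eqref{S1r:c}, \eqref{S1r:d}, and \eqref{S1r:f} are literally the same functionals as \eqref{L1r:b}, \eqref{L1r:d}, \eqref{L1r:e}, \eqref{L1r:g}, so on those functionals $\varpi_1^{r-1}\rot p$ can be replaced by $\Pi_1^{r-1}\rot p$, and the already-proved identity \eqref{S0rotL1} of Theorem \ref{thm:Commute1} finishes them in one line. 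You instead re-derive these terms from scratch --- decomposing $w=w_n n + w_t t$ on each $e\in\mathcal{E}^b(\PST)$, integrating $\int_e \p_t(\Pi_0^r p - p)\,w_n$ by parts with endpoint terms killed by \eqref{S0r:a} and \eqref{S0r:c}, then using \eqref{S0r:e} and \eqref{S0r:d} --- which is precisely the computation inside the proof of Theorem \ref{thm:Commute1}, repeated rather than cited. Both are valid; the paper's citation trick is shorter and makes the logical dependence on Theorem \ref{thm:Commute1} explicit, while your version is self-contained. In your favor, you flag two points the paper passes over silently: that continuity of $\dive \varpi_1^{r-1}v$ is what makes the nodal evaluations at $z_{3+i}$ in \eqref{Lr2:b} meaningful, and that for $r=2$ the target space $L_0^2(\PST)$ consists of constants and lies outside the hypothesis $r\ge 1$ of Lemma \ref{lem:Lr2}, so \eqref{S1divL2} must be closed by the mean-value DOF \eqref{Lr2:c2} alone --- a corner case worth recording.
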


\section{Global Spaces}\label{sec:Global}


In this section, we study the global finite element spaces induced by the degrees of freedom in Section \ref{sec:commuting_projections}. We let $\mct$ represent the simplicial triangulation of the {polygonal} domain $\Omega \subset \mathbb{R}^2$, and $\mctps$ represent the Powell-Sabin refinement of $\mct$, as discussed in the introduction. We define the set $\mathcal{M}(\mctps)$ to be the points of intersection of the edges of $\mct$ with the edges that adjoin interior points. We also let $\mathcal{E}^b(\mctps)$ be the collection of all the new edges of $\mctps$ that were obtained by sub-dividing edges of $\mct$. We let $\mathcal{E}(\mct)$ be the edges of $\mct$. By the construction of $\mctps$ every $x \in \mathcal{M}(\mctps)$ belongs to edges that lie on two straight lines. Therefore, these vertices are singular vertices \cite{ScottVogelius85}. 
It is important to note that to make our global spaces to have the correct continuity it is essential to construct the meshes in such a way
\cite{SplineBook,PowellSabin77}.
Furthermore, as previously mentioned, the divergence of continuous, piecewise polynomials
have a weak continuity property at singular vertices, i.e., at the vertices in $\mathcal{M}(\mctps)$.  In detail,
  let   $z \in \mathcal{M}(\mctps)$ and suppose that $z$ is an interior vertex. 
 Then it is a vertex of four triangles $K_1, \ldots, K_4 \in \mctps$.  For a function $q$ we define
 \begin{equation*}
 \theta_z(q) := |q|_{K_1}(z) -q|_{K_2}(z)+ q|_{K_3}(z)-q|_{K_4}(z)|.
 \end{equation*}
Then, if $v$ is a continuous piecewise polynomial 
 with respect to $\mctps$, there holds $\theta_z(\Div v) = 0$ \cite{ScottVogelius85}.

The degrees of freedom stated in  Lemmas \ref{lem:Sr0}--\ref{lem:Lr2}
 induce the following spaces
 \begin{alignat*}{1}
  S_r^0 (\mctps)=&\{ q \in C^1(\Omega): q|_T  \in S_r^0(\PST) \, \forall T \in \mathcal{T}_h \}, \\
  S_{r}^1(\mctps) = &\{ v \in [C(\Omega)]^2 : \dive v \in C(\Omega), v|_T \in S_{r}^1(\PST) \, \forall T \in \mct \}, \\
  L_{r}^1(\mctps) = &\{v \in [C(\Omega)]^2 : v|_T \in L_{r}^1(\PST) \, \forall T \in \mct\}, \\
  L_{r}^2(\mctps) = &\{p \in C(\Omega) : p|_T \in L_{r}^2 (\PST) \, \forall T \in \mct \}, \\
  \VV_{r}^2(\mctps) = &\{p \in L^2(\Omega) : p|_T \in V_{r}^2(\PST) \, \forall T \in \mct, \, \theta_z(p) = 0, \ \forall z \in \mathcal{M}(\mctps) \text{ and } z \text{ an interior node}\}.
 \end{alignat*}
 \begin{remark}
 Let $z \in \mathcal{M}(\mctps)$ be an interior vertex and $T_1, T_2 \in \mct$ share a common edge where $z$ lies. 
 Then  $\theta_z(q) = 0$ if and only if $\jmp{q_1}(z)=\jmp{q_2}(z)$ where $q_i=q|_{T_i}$. Therefore, the local degrees of freedom for $V_r^2(\PST)$ with the jump condition \eqref{V2r:a} do indeed induce the global space $\VV_r^2(\mctps)$ above.
 \end{remark}
 
 We list the degrees of freedom of these spaces.  The global DOF come directly from the local DOF. We list them here to be precise.

It follows from Lemma \ref{lem:Sr0} that a function $q \in S_r^0(\mctps)$, with $r \geq 2$, is uniquely determined by
\begin{alignat*}{2}
q(z),\ \nab q(z) & \qquad  && \text{for every vertex } z \text{ of } \mct,\\
q(z), \partial_t q(z) & \qquad  && \forall z \in \mathcal{M}(\mathcal{T}_h^{\PS}),   \text{ if } r \ge 3, \\
\int_{e} \partial_n q \, p & \qquad  &&  \forall p \in \pol_{r-3}(e), \text{ for all } e \in \mathcal{E}^b( \mathcal{T}_h^{\PS}) \\
\int_{e } q  p & \qquad  && \forall p \in \pol_{r-4}(e), \text{ for all } e \in \mathcal{E}^b( \mathcal{T}_h^{\PS}),  \\
\int_{T} \rot q  \cdot \rot p & \qquad && \forall p \in \mathring{S}_{r}^0(\PST), \text{ for all } T \in \mathcal{T}_h.
\end{alignat*}
\begin{remark}
The degrees of freedom for $r=2$ coincide with the known degrees of freedom of Powell-Sabin \cite{PowellSabin77,SplineBook}. 
Recently, results for polynomial degrees $r=3, 4$ have appeared \cite{Gros16A,Gros16B}. 
\end{remark}

Lemma \ref{lem:Lr1DOFs} shows that a function $v \in L_r^1(\mctps)$ is uniquely determined by the values
\begin{alignat*}{2}
v(z), & \qquad && \text{for every vertex } z \text{ of } \mct,\\
\int_{e} (v \cdot n), & \qquad && \forall e \in \mathcal{E}(\mct), \text{ if } r = 1,\\
\jmp{\dive v}(z), & \qquad && \forall z \in \mathcal{M}(\mctps), \\
v(z) \cdot n, & \qquad &&\forall z \in \mathcal{M}(\mctps), \text{ if } r \ge 2,\\
\int_e v \cdot w, & \qquad && \forall w \in [\pol_{r-2}(e)]^2, \ \forall e \in \mathcal{E}^b (\mctps),\\
\int_T v \cdot \rot w, & \qquad && \forall w \in \mathring{S}_{r+1}^0(\PST), \forall T \in \mct, \\
\int_T \dive v \, w, & \qquad && \forall w \in \mathring{\VV}_{r-1}^2(\PST), \forall T \in \mct.
\end{alignat*}

A function $q\in \VV_r^2(\mctps)$, for $r \ge 0$, is uniquely determined by
\begin{alignat*}{2}
\jmp{q}(z), & \qquad && \forall z \in \mathcal{M}(\mctps), \\
\int_T q = 0, & \qquad && \forall T \in \mct, \\
\int_T qp & \qquad && \forall p \in \mathring{\VV}_r^2(\mctps), \forall T \in \mct.
\end{alignat*}

A function $v \in S_r^1(\mctps)$ is determined by the following degrees of freedom.
\begin{alignat*}{2}
v(z), \ \dive v(z) & \qquad && \text{for every vertex } z \text{ of } \mct, \\
\int_{e} (v \cdot n_i), & \qquad && \forall e \in \mathcal{E}(\mct), \text{if } r = 1, \\
v(z) \cdot n, \ \dive v(z) & \qquad && \forall z \in \mathcal{M}(\mctps), \text{ if } r \ge 2,  \\
\int_e v \cdot w & \qquad && \forall w \in [\pol_{r-2}(e)]^2, \ e \in \mathcal{E}^b(\mctps), \\
\int_e (\dive v) q & \qquad && \forall q \in \pol_{r-3}(e), \ e \in \mathcal{E}^b(\mctps), \\
\int_T v \cdot \rot w & \qquad && \forall w \in \mathring{S}_{r+1}^0(\PST) \text{ for all } T \in \mct, \\
\int_T \dive v\, w & \qquad && \forall w \in \mathring{L}_{r-1}^2(\PST) \text{ for all } T \in \mct.
\end{alignat*}

A function $q \in L_r^2(\mctps)$, if $r \ge 1$, is determined by the degrees of freedom
\begin{subequations}
\begin{alignat*}{2}
q(z) & \qquad &&  1 \leq i \leq 3, \quad  \text{for every vertex } z \text{ of } \mct , \\
q(z) &\qquad && 1 \leq i \leq 3, \quad \forall z \in \mathcal{M}(\mctps), \\
\int_e qp  &\qquad && \forall p \in \pol_{r-2}(e), \ \forall e \in \mathcal{E}^b(\PST), \\
\int_T q & \qquad  && \\
\int_T qp & \qquad && \forall p \in \mathring{L}_r^2(\mctps).
\end{alignat*}
\end{subequations}

Each of the following sequences of spaces forms a complex.
\begin{subequations}\label{seq:global}
\begin{align}
\bbR \lrarrow &S_r^0(\mctps) \overset{\rot}{\lrarrow} L_{r-1}^1(\mctps) \overset{\dive}{\lrarrow} \VV_{r-2}^2(\mctps) \lrarrow 0, \quad r \ge 2,  \label{seq:SLV}  \\
\bbR \lrarrow &S_r^0(\mctps) \overset{\rot}{\lrarrow} S_{r-1}^1(\mctps) \overset{\dive}{\lrarrow} L_{r-2}^2(\mctps) \lrarrow 0, \quad r\ge 3. \label{seq:SSL}
\end{align}
\end{subequations}

\begin{remark}
The spaces  $L_{1}^1(\mctps)$ and $ \dive L_{1}^1(\mctps)$ were considered by Zhang \cite{Zhang08} for approximating incompressible flows. In particular, he proved inf-sup stability of this pair. However, he does not explicitly write the relationship $\VV_{r-2}^2(\mctps) =\dive   L_{r-1}^1(\mctps)$, which we know holds. 
\end{remark}

Additionally, we can define commuting projections. For example, for the sequences \eqref{seq:SLV} and \eqref{seq:SSL}, we  define $\pi_i^r$ such that, for $0 \leq i \leq 2$, $\pi_i^r v|_T = \Pi_i^r (v|_T)$ for all $T \in \mct$. By using Theorem \ref{thm:Commute1}, 
we find that following diagram commutes:
\begin{center}
\begin{tikzcd}
  \mathbb{R} \arrow[r] 
    & C^\infty(S) \arrow[d, "\pi^{r}_0"] \arrow [r,"\rot"] & {[C^\infty(S)]}^2 \arrow[d,"\pi^{r-1}_1"] \arrow[r,"\dive"] & C^\infty(S) \arrow[r] \arrow[d,"\pi^{r-2}_2"]& 0 \\
  \mathbb{R} \arrow[r] & S_{r}^0(\mctps) \arrow[r,"\rot"] & L_{r-1}^1(\mctps) \arrow[r,"\dive"] & \VV_{r-2}^2(\mctps) \arrow[r] & 0.
 \end{tikzcd}
 \end{center}
 Similarly, defining the projections $\chi_i^r v |_T = \varpi_i^r(v|_T)$ for $i = 1,2$, it follows from Theorem \ref{thm:Commute2} that
 the following diagram commutes:
\begin{center}
\begin{tikzcd}
  \mathbb{R} \arrow[r] 
    & C^\infty(S) \arrow[d, "\pi^{r}_0"] \arrow [r,"\rot"] & {[C^\infty(S)]}^2 \arrow[d,"\chi^{r-1}_1"] \arrow[r,"\dive"] & C^\infty(S) \arrow[r] \arrow[d,"\chi^{r-2}_2"]& 0 \\
  \mathbb{R} \arrow[r] & S_{r}^0(\mctps) \arrow[r,"\rot"] & S_{r-1}^1(\mctps) \arrow[r,"\dive"] & L_{r-2}^2(\mctps) \arrow[r] & 0.
 \end{tikzcd}
 \end{center}
The proofs that these projections commute are similar to the local cases.
The top sequences (the non-discrete spaces) are exact if $S$ is simply connected \cite{Costabel}. In the next result, we will show that the bottom sequences (the discrete spaces) are also exact on simply connected domains.

\begin{theorem}
Suppose that $\Omega$ is simply connected. Then the sequence \eqref{seq:SLV} is exact
for $r\ge 2$, and the sequence \eqref{seq:SSL} is exact for $r\ge 3$.
\end{theorem}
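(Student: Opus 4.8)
The plan is to prove exactness of each complex at all three spots, leveraging the commuting diagrams built from the commuting projections $\pi_i^r$ and $\chi_i^r$, together with the exactness of the top (continuous) de Rham sequence on a simply connected domain. Throughout I would use the standard finite-element-exterior-calculus principle: if a bounded cochain projection $\pi$ exists commuting with the differentials, and the continuous complex is exact, then surjectivity of the discrete differentials is inherited, while injectivity at the first spot is handled directly. Let me focus on \eqref{seq:SLV}, since \eqref{seq:SSL} is handled identically with $L_{r-1}^1,\VV_{r-2}^2$ replaced by $S_{r-1}^1,L_{r-2}^2$.

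First I would handle exactness at $S_r^0(\mctps)$. The composition $\rot$ following the inclusion of $\mathbb{R}$ is zero since constants have vanishing $\rot$, and conversely if $q\in S_r^0(\mctps)$ has $\rot q=0$ then $q$ is locally constant; since $q\in C^1(\Omega)$ and $\Omega$ is connected, $q$ is globally constant, giving exactness at the first spot. Next, exactness at $\VV_{r-2}^2(\mctps)$ amounts to surjectivity of $\dive : L_{r-1}^1(\mctps)\to \VV_{r-2}^2(\mctps)$. Here I would argue as follows: given $p\in\VV_{r-2}^2(\mctps)$, view it as an element of $C^\infty$ data (or approximate via a smooth $\tilde p$ with the same DOF values using a continuous right inverse of the smooth $\dive$, which exists by exactness of the top row on simply connected $\Omega$); then apply the commuting relation $\dive\,\pi_1^{r-1}\tilde v=\pi_2^{r-2}\dive\tilde v$ from Theorem \ref{thm:Commute1} to a continuous preimage $\tilde v$ of a smooth lift of $p$. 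The projected $v=\pi_1^{r-1}\tilde v\in L_{r-1}^1(\mctps)$ then satisfies $\dive v=\pi_2^{r-2}p=p$, since $p$ is already in the target space and the projection fixes it.

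The central and most delicate spot is exactness at $L_{r-1}^1(\mctps)$: I must show that every $v\in L_{r-1}^1(\mctps)$ with $\dive v=0$ lies in $\rot S_r^0(\mctps)$. The inclusion $\rot S_r^0\subset\ker\dive$ is immediate from $\dive\rot=0$. For the reverse, given such a divergence-free $v$, exactness of the top continuous row on the simply connected domain produces a continuous scalar $\phi$ with $\rot\phi=v$; one may take $\phi$ smooth by mollification since $v$ is piecewise smooth and globally continuous. Then I would set $q=\pi_0^r\phi\in S_r^0(\mctps)$ and invoke the other commuting relation $\rot\,\pi_0^r\phi=\pi_1^{r-1}\rot\phi=\pi_1^{r-1}v=v$, where the last equality holds because $v$ is already in $L_{r-1}^1(\mctps)$ and the projection is the identity there. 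This yields $v=\rot q$, completing exactness.

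The main obstacle I anticipate is the global-to-local bookkeeping in the surjectivity arguments, specifically verifying that the locally defined commuting projections $\pi_i^r$ are in fact well-defined global maps into the conforming global spaces---i.e., that the local DOF values glue to produce functions with the required global continuity ($C^1$ for $S_r^0(\mctps)$, continuity of the function and of $\dive$ for $S_r^1$, the singular-vertex constraint $\theta_z=0$ for $\VV_r^2$). This is exactly where the singular-vertex structure at $\mathcal{M}(\mctps)$ and the weak continuity $\theta_z(\dive v)=0$ enter: one must check that shared DOFs on interelement edges and at the points of $\mathcal{M}(\mctps)$ are single-valued so the patched function is conforming, and that the commuting identities, proven locally in Theorems \ref{thm:Commute1} and \ref{thm:Commute2}, survive the gluing. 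Once the projections are confirmed to be bounded cochain maps onto the global conforming spaces, exactness follows formally from the exactness of the top row and the two identities above.
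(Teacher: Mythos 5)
Your treatment of exactness at $L_{r-1}^1(\mctps)$ is essentially sound and close to the paper's own argument, except that your mollification step is broken as written: smoothing the potential $\phi$ destroys the exact identity $\rot \phi = v$, so you cannot then apply $\pi_0^r$ and invoke commutativity. The paper avoids this entirely by observing that any $H({\rm rot};\Omega)$ potential of a divergence-free $v\in L_{r-1}^1(\mctps)$ is \emph{automatically} a piecewise polynomial of degree $r$ (since $\rot q = v$ forces this locally) and is $C^1$ because $v$ is continuous; hence $q\in S_r^0(\mctps)$ with no projection needed.

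The genuine gap is in your surjectivity argument for $\dive: L_{r-1}^1(\mctps)\to \VV_{r-2}^2(\mctps)$. The FEEC-style argument you propose requires every $p\in\VV_{r-2}^2(\mctps)$ to admit a smooth lift $\tilde p$ with the same DOF values, so that $p=\pi_2^{r-2}\tilde p$. This fails: the DOFs of $\VV_{r-2}^2$ include the jumps $\jmp{p}(z)$ at the singular vertices $z\in\mathcal{M}(\mctps)$, and these vanish identically when the input is smooth, so the image of $C^\infty$ under $\pi_2^{r-2}$ lies in the \emph{proper} subspace of functions continuous at $\mathcal{M}(\mctps)$. But $\VV_{r-2}^2(\mctps)$ genuinely contains discontinuous functions: the constraint $\theta_z(p)=0$ only forces the jumps seen from the two macro-triangles meeting at $z$ to agree, not to vanish --- this is precisely why the space $\VV$ was introduced in place of $L^2_{r-2}$. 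So for such $p$ your concluding step $\dive v = \pi_2^{r-2}p = p$ is unreachable (and the same issue afflicts $\pi_1^{r-1}$, since $\jmp{\dive\,\cdot}(z)$ is one of its DOFs and vanishes on smooth inputs). The paper circumvents exactly this obstruction by abandoning the commuting-projection route for surjectivity and instead counting dimensions: having identified $\ker(\dive)=\rot S_r^0(\mctps)$, it computes
\begin{equation*}
\dim\bigl(\dive L_{r-1}^1(\mctps)\bigr) \;=\; \dim L_{r-1}^1(\mctps) - \dim S_r^0(\mctps) + (\bbV-\bbE+\bbT)
\end{equation*}
by rank--nullity and Euler's relation, and verifies from the global DOF counts that this equals $\dim \VV_{r-2}^2(\mctps)$; the argument for \eqref{seq:SSL} is analogous. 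Note that your projection argument \emph{would} work for the second sequence, whose final space $L_{r-2}^2(\mctps)$ has only point-value and moment DOFs realizable by smooth functions, but it cannot prove exactness of \eqref{seq:SLV}.
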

\begin{proof}

Suppose that $v\in L_{r-1}^1(\mctps)$ satisfies $\Div v=0$.
Using the inclusion $S_{r-1}^1(\mctps)\subset H({\rm div};\Omega)$
and  standard results, there exists $q\in H({\rm rot};\Omega)$
such that $v = \rot q$.  Because $v$ is a piecewise polynomial
of degree $r-1$, it follows that $q$ is a piecewise polynomial 
of degree $r$.  Moreover, $v$ is continuous and therefore $q\in C^1(S)$.
Thus it follows that $q\in S_r^0(\mctps)$.
Note that this result shows that if $v\in L_{r-1}^1(\mctps)$
satisfies $\Div v=0$, then $v = \rot q$ for some $q\in S_r^0(\mctps)$.

Thus to prove the result, it suffices to show
that the mappings ${\rm div}:L_{r-1}^1(\mctps)\to V_{r-2}^2(\mctps)$
and ${\rm div}:S_{r-1}^1(\mctps)\to L_{r-2}^2(\mctps)$
are surjections.  This will be accomplished by showing
that $\dim ({\rm div}L_{r-1}^1(\mctps)) = \dim V_{r-2}^2(\mctps)$
and  $\dim ({\rm div}S_{r-1}^1(\mctps)) = \dim L_{r-2}^2(\mctps)$.

Denote by $\bbV$, $\bbE$, and $\bbT$
the number of vertices, edges, and triangles
in $\mct$, respectively.
The degrees of freedom given above show that, for $r\ge 2$,
\begin{align*}
\dim S_r^0(\mctps)
%
& = 3\bbV + (4r-8)\bbE+3(r-2)(r-3)\bbT,\\
\dim L_{r-1}^1(\mctps)
%
& = 2\bbV + (4r-6)\bbE + 3(r-2)(r-3)\bbT + \big(3(r-1)r-4\big)\bbT,\\
\dim V_{r-2}^2(\mctps)
& = \bbE+\bbT +\big(3(r-1)r-4\big)\bbT,%
%
\end{align*}
We then find, by the rank--nullity theorem
and the Euler relation $\bbV-\bbE+\bbT=1$ that
\begin{align*}
\dim (\Div L_{r-1}^1(\mctps))
& = \dim L_{r-1}^1(\mctps) - \dim (\rot S_r^0(\mctps))\\
& = \dim L_{r-1}^1(\mctps) - \dim S_r^0(\mctps)+1\\
& = \dim L_{r-1}^1(\mctps) - \dim S_r^0(\mctps)+(\bbV-\bbE+\bbT)\\
& = 2\bbV + (4r-6)\bbE + 3(r-2)(r-3)\bbT + \big(3(r-1)r-4\big)\bbT\\
&\qquad  - \big(3\bbV + (4r-8)\bbE+3(r-2)(r-3)\bbT\big)+(\bbV-\bbE+\bbT)\\
& = \bbE + \bbT+\big(3(r-1)r-4\big)\bbT = \dim V_{r-2}^2(\mctps).
\end{align*}
Likewise, we have for $r\ge 3$,
\begin{align*}
\dim S_{r-1}^1(\mctps)
%
& = 3\bbV+(6r-12)\bbE +  3(r-2)(r-3)\bbT + 3(r-2)(r-3)\bbT,\\
\dim L_{r-2}^2(\mctps)
%
& = \bbV+(2r-5)\bbE + \bbT +3(r-2)(r-3)\bbT,
\end{align*}
and therefore
\begin{align*}
\dim (\Div S_{r-1}^1(\mctps))
& = \dim S_{r-1}^1(\mctps) - \dim S_r^0(\mctps)+(\bbV-\bbE+\bbT)\\
& =  3\bbV +(6r-12)\bbE+6(r-2)(r-3)\bbT\\
&\qquad -\big(3\bbV + (4r-8)\bbE+3(r-2)(r-3)\bbT\big)+(\bbV-\bbE+\bbT)\\
& = \bbV+(2r-5)\bbE +3(r-2)(r-3)\bbT + \bbT = L_{r-2}^2(\mctps).
\end{align*}

\end{proof}

\section{Conclusion}\label{sec:Conclusions}
We have developed smooth finite element spaces on Powell-Sabin splits
that form exact sequences in two dimensions.
We plan to investigate the extension to higher-dimensions in the near future.
Another    interesting question is whether smoother finite element spaces (e.g., $C^2$)
fit an exact sequence on Powell--Sabin triangulations.

\appendix

\section{Proof of Lemma \ref{lem:edge2}}
\begin{proof}
Suppose $z \in W_r(\{a,m,b\})$ is such that \eqref{1d:rgeq1}--\eqref{1d:rgeq2b} are all zero. 
We will show that $z$ must be identically zero on $[a,b]$. Let $\psi(x)$ be a degree $r$ polynomial on the interval $[0,1]$
satisfying
\begin{alignat}{2}
\begin{split}\label{psi}
\psi(0) &= 1,\ \
\psi(1) = 0, \\
\int_0^1 \psi(x) p(x)  &= 0 \hspace{15pt} \forall p \in \pol_{r-2}([0,1]).
\end{split}
\end{alignat}
We note that these conditions uniquely determine $\psi$.
Since $z$ is continuous at $m$ and equal to zero at $a$ and $b$, and in 
view of \eqref{1d:rgeq2a}--\eqref{1d:rgeq2b}, it follows that $z$ may be represented by
\begin{alignat*}{1}
z(y) &= z(m)\begin{cases}
			\psi\left(\frac{y-m}{a-m}\right) & y \in [a,m], \\
			\psi\left(\frac{m-y}{m-b}\right) & y \in [m,b].
			\end{cases}
\end{alignat*}
Since $z'(y)$ is continuous at $m$, it must hold that
\begin{alignat*}{1}
\frac{-1}{m-b} \psi'(0) = \frac{1}{a-m}\psi'(0).
\end{alignat*}
Furthermore, given the conditions \eqref{psi} on $\psi$, we can show that $\psi'(0) \neq 0$. Suppose that $\psi'(0) = 0$ in addition to \eqref{psi}. Then for any $p \in \pol_{r-1}([0,1])$ with $p(0) = 0$, 
\begin{alignat*}{1}
	\int_0^1 \psi'(x) p(x) &= -\int_0^1 \psi(x) p'(x) + \psi(1) p(1) - \psi(0) p(0) = -\int_0^1 \psi(x) p'(x)  = 0
\end{alignat*}
since $p'(x) \in \pol_{r-2}([0,1])$. But $\psi'(x)$ is itself such a function $p(x)$, so it follows that
\begin{alignat*}{1}
	\int_0^1 |\psi'(x)|^2  = 0.
\end{alignat*}
Then $\psi'(x) = 0$, and $\psi$ is constant on $[0,1]$. This contradicts \eqref{psi}, so $\psi'(0) \neq 0$. 
Furthermore, since $1/(b-m) \neq 1/(a-m)$, it follows that $z(m) = 0$. Therefore $z = 0$ on $[a,b]$.
\end{proof}

\section{Proof of Theorem \ref{thm:Commute2}}\label{sec:Commute2}

\begin{proof}
(i) \textit{Proof of \eqref{S0rotS1}.}  Let $p \in C^\infty(T)$ and $\rho := \rot \Pi_0^r p - \varpi_1^{r-1} \rot p \in S_{r-1}^1(\PST)$. 
We show that $\rho$ vanishes on \eqref{S1r}.

First,
\begin{align*}
\rho(z_i) &= \rot \Pi_0^r p(z_i) - \varpi_1^{r-1} \rot p(z_i) = 0,\\
\Div \rho(z_i) & = -\Div \varphi_1^{r-1} \rot p(z_i) = -\Div \rot p(z_i) = 0,
\end{align*}
by the definitions of $\Pi_0^r$ and $\varpi_1^{r-1}$ along with DOFs \eqref{S0r:a} and \eqref{S1r:a}.

Next, if $r=2$,
\begin{align*}
\int_{e_i} \rho\cdot n_i 
& = \int_{e_i} \big(\rot \Pi_0^r p - \varpi_1^{r-1} \rot p \big)\cdot n_i\\
& = \int_{e_i} \big(\rot \Pi_0^r p - \Pi_1^{r-1} \rot p \big)\cdot n_i=0,
\end{align*}
using \eqref{S1r:b}, \eqref{L1r:b} and \eqref{S0rotL1}.  
Similar arguments show that, for $r\ge 3$,
\begin{align*}
\rho(z_{3+i})\cdot n_i 
&= (\rot \Pi_0^r p (z_{3+i}) - \Pi_1^{r-1} \rot p(z_{3+i}))\cdot n_i =0,\\
\int_e \rho\cdot w 
&= \int_e (\rot \Pi_0^r p - \varpi_1^{r-1} \rot p)\cdot w
 = \int_e (\rot \Pi_0^r p - \Pi_1^{r-1} \rot p)\cdot w = 0,
 \intertext{and}
 \int_T \rho \cdot \rot w &= 
 \int_T (\rot \Pi_0^r p - \Pi_1^{r-1} \rot p) \cdot w = 0.
 \end{align*}

Next using \eqref{S1r:c} gives
\begin{align*}
\Div \rho(z_{3+i}) = -\Div \varpi_1^{r-1} \rot p (z_{3+i}) = -\Div \rot p (z_{3+i}) = 0,
\end{align*}
and \eqref{S1r:e} yields
\begin{align*}
\int_e (\Div \rho) q = -\int_e (\Div \varpi_1^{r-1} \rot p) q = -\int_e (\Div \rot p)q = 0
\end{align*}
for all $q\in \pol_{r-4}(e)$ and $e\in \mathcal{E}^b(\PST)$.  The same
arguments, but using \eqref{S1r:g}, gives
\begin{align*}
\int_T (\Div \rho) q=0\qquad \forall q\in \mathring{L}_{r-1}^2(\PST).
\end{align*}
Applying Lemma \ref{lem:S1r} shows
that $\rho\equiv 0$, and so \eqref{S0rotS1} holds.

(ii) \textit{Proof of \eqref{S1divL2}}. For some $v \in [C^\infty(T)]^2$, we define $\rho := \dive \varpi_1^{r-1} v - \varpi_2^{r-2} \dive v \in L_{r-2}^2(\PST)$. Then we need only show that $\rho$ is zero for all DOFs in \eqref{Lr2}. For the vertex DOFs, we have for each $z_i$,
\begin{align*}
\rho(z_i) &= \dive \varpi_1^{r-1} v(z_i) - \varpi_2^{r-2} \dive v(z_i) = 0,
\end{align*}
by \eqref{S1r:a} and \eqref{Lr2:a}. Next, for each $i = 1,2,3$, 
\begin{align*}
\rho(z_{3+i}) &= \dive \varpi_1^{r-1} v(z_{3+i}) - \varpi_2^{r-2} \dive v(z_{3+i}) = 0,
\end{align*}
where we have used \eqref{S1r:a} and \eqref{Lr2:b}.  Similar arguments show
that
\[
\int_e \rho q =0\qquad \forall q\in \pol_{r-4}(e),\ e\in \mathcal{E}^b(\PST),
\]
by \eqref{S1r:e} and \eqref{Lr2:c}, and that
\[
\int_T \rho q = 0\qquad \forall q\in \mathring{L}_{r-2}^2(\PST)
\]
by \eqref{S1r:g} and \eqref{Lr2:d}. Using \eqref{Lr2:c2} and \eqref{S1r:b} if $r = 2$ or \eqref{S1r:d} if $r > 2$,
\begin{align*}
\int_T \rho &= \int_T \dive \varpi_1^{r-1} v  -\varpi_2^{r-2} \dive v = \int_T \dive(\varpi_1^{r-1} v - v) = \int_{\p T} (\varpi_1^{r-1} v - v )\cdot n = 0.
\end{align*}

Therefore, $\rho \equiv 0$ on $T$ by Lemma \ref{lem:Lr2}, and \eqref{S1divL2} is proved.
\end{proof}

\end{document}